\DeclareMathAlphabet{\mathpzc}{OT1}{pzc}{m}{it}
\newcommand{\cC}{{\mathcal C}}
\newcommand{\sL}{{\mathscr L}}
\newcommand{\pa}{\parallel}
\newcommand{\sm}{\setminus}
\newcommand{\C}{\mathcal{C}}
\newcommand{\Cp}{\mathcal{C}_P}
\newcommand{\N}{\mathcal{N}}
\newcommand{\Nh}{\mathcal{N}_H}
\newcommand{\Npp}{\mathcal{N}_{P'}}
\newtheorem{thm}{Theorem}[section]
\newtheorem{lem}[thm]{Lemma}
\newtheorem{prop}[thm]{Proposition}
\newtheorem{cor}[thm]{Corollary}
\begin{document}

\renewcommand{\thefootnote}{\arabic{footnote}}
 	
\title{{\bf Vertex positions of the generalized orthocenter and a related elliptic curve}}

\author{\renewcommand{\thefootnote}{\arabic{footnote}}
Igor Minevich and Patrick Morton}
\maketitle

\begin{section}{Introduction}

In the third of our series of papers on cevian geometry \cite{mm3}, we have studied the properties of the generalized orthocenter $H$ of a point $P$ with respect to an ordinary triangle $ABC$ in the extended Euclidean plane, using synthetic techniques from projective geometry.  This generalized orthocenter is defined as follows.  Letting $K$ denote the complement map with respect to $ABC$ and $\iota$ the isotomic map (see \cite{ac}, \cite{mm1}), the point $Q=K \circ \iota (P)$ is called the isotomcomplement of $P$.  Further, let $D, E, F$ denote the traces of $P$ on the sides of $ABC$.  The generalized orthocenter $H$ is defined to be the unique point $H$ for which the lines $HA, HB, HC$ are parallel to $QD, QE, QF$, respectively. We showed (synthetically) in \cite{mmq} that $H$ is given by the formula
$$H = K^{-1} \circ T_{P'}^{-1} \circ K(Q),$$
where $T_{P'}$ is the unique affine map taking $ABC$ to the cevian triangle $D_3E_3F_3$ of the isotomic conjugate $P'=\iota(P)$ of $P$.  The related point
$$O=K(H)=T_{P'}^{-1} \circ K(Q)$$
is the generalized circumcenter (for $P$) and is the center of the circumconic $\tilde{\mathcal{C}}_O=T_{P'}^{-1}(\mathcal{N}_{P'})$, where $\mathcal{N}_{P'}$ is the nine-point conic for the quadrangle $ABCP'$ (see \cite{mm3}, Theorems 2.2 and 2.4; and \cite{co1}, p. 84). \medskip

\begin{figure}
\[\includegraphics[width=5.5in]{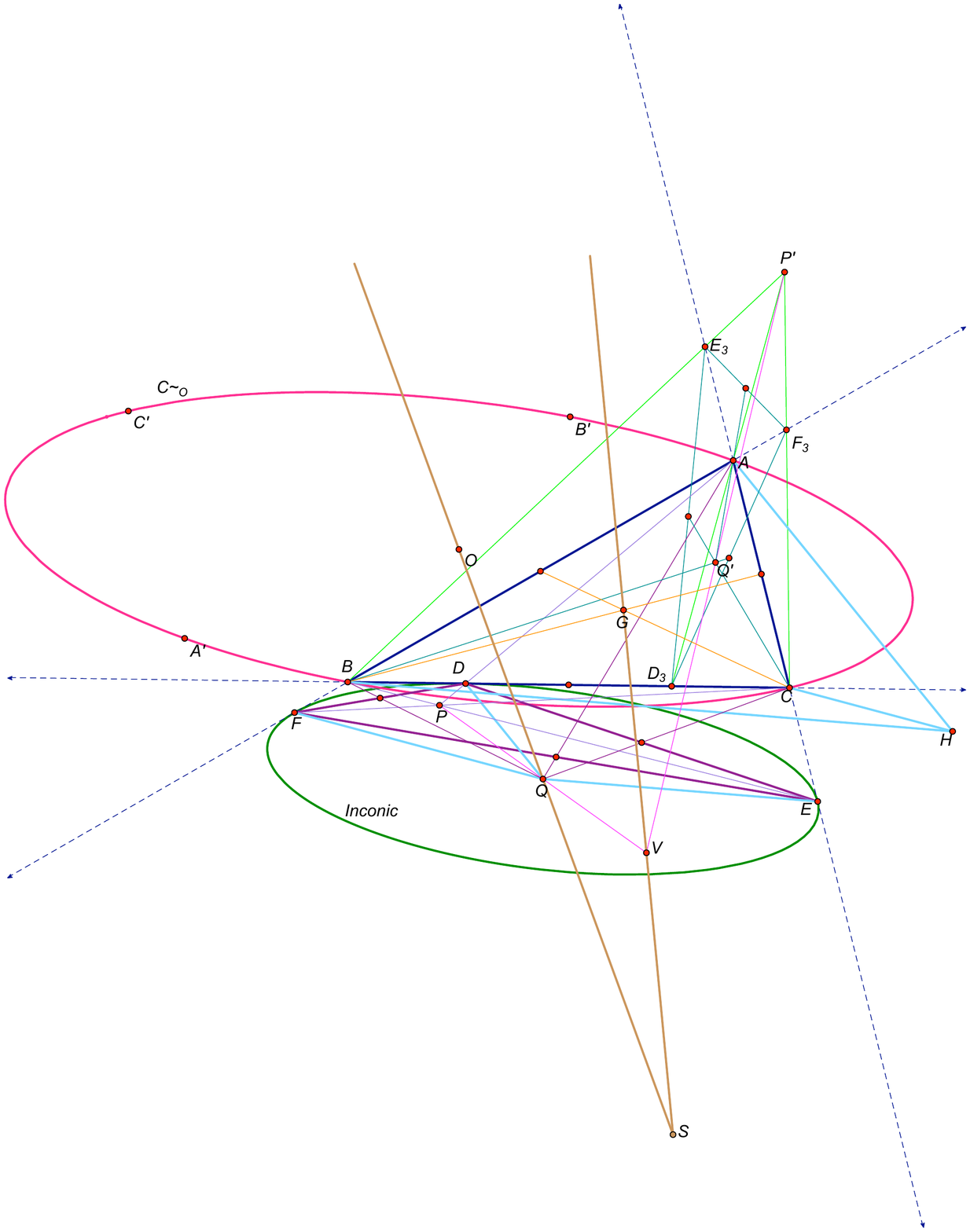}\]
\caption{The conics $\tilde{\mathcal{C}}_O$ (strawberry) and $\mathcal{I}$ (green).}
\label{fig:conics}
\end{figure}

We also showed in \cite{mm3}, Theorem 3.4, that if $T_P$ is the unique affine map taking $ABC$ to the cevian triangle $DEF$ of $P$, then the affine map $\textsf{M}=T_P \circ K^{-1} \circ T_{P'}$ is a homothety or translation which maps the circumconic $\tilde{\mathcal{C}}_O$ to the inconic $\mathcal{I}$, defined to be the conic with center $Q$ which is tangent to the sides of $ABC$ at the points $D, E, F$.  In the classical case, when $P=Ge$ is the Gergonne point of triangle $ABC$, the points $O$ and $H$ are the usual circumcenter and orthocenter, and the conics $\tilde{\mathcal{C}}_O$ and $\mathcal{I}$ are the circumcircle and incircle, respectively.  In that case the map $\textsf{M}$ taking $\tilde{\mathcal{C}}_O$ to $\mathcal{I}$ is a homothety, and its center is the insimilicenter $S$.  In general, if $G$ is the centroid of $ABC$, and $Q'=K(P)$, then the center of the map $\textsf{M}$ is the point
$$S=OQ \cdot GV = OQ \cdot O'Q', \ \ \textrm{where} \ V=PQ \cdot P'Q',$$
and $O'=T_{P}^{-1} \circ K(Q')$ is the generalized circumcenter for the point $P'$.  (See Figure \ref{fig:conics}.)  \medskip

In this paper we first determine synthetically the locus of points $P$ for which the generalized orthocenter is a vertex of $ABC$.  This turns out to be the union of three conics minus six points.  Excepting the points $A, B, C$, these three conics lie inside the Steiner circumellipse $\iota(l_\infty)$ ($l_\infty$ is the line at infinity), and each of these conics is tangent to $\iota(l_\infty)$ at two of the vertices.  (See Figure \ref{fig:locus}.)  We also consider a special case in which $H$ is a vertex of $ABC$ and the map $\textsf{M}$ is a translation, so that the circumconic $\tilde{\C}_O$ and the inconic are congruent.  (See Figures \ref{fig:2.1} and \ref{fig:2.2} in Section 2.)  In Section 3, we synthetically determine the locus of all points $P$ for which $\textsf{M}$ is a translation, which is the set of $P$ for which $S \in l_\infty$.  We determine necessary and sufficient conditions for this to occur in Theorem \ref{thm:trans}; for example, we show $\textsf{M}$ is a translation if and only if the point $P$ lies on the conic $\tilde{\C}_O$.  (This situation does not occur in the classical situation, when $P$ is the Gergonne point, since this point always lies {\it inside} the circumcircle.)  Using barycentric coordinates we show that this locus is an elliptic curve minus $6$ points.  (See Figure \ref{fig:3.3}.)  We also show that there are infinitely many points $P$ in this locus which can be defined over the quadratic field $\mathbb{Q}(\sqrt{2})$, i.e., whose barycentric coordinates can be taken to lie in this field.  In particular, given two points in this locus, a third point can be constructed using the addition on the elliptic curve.  In Section 4 we show how this elliptic curve, minus a set of $12$ torsion points, may be constructed as the locus of points $P=\textsf{A}(P_1)$, where $\textsf{A}$ runs over the affine mappings taking inscribed triangles (with a fixed centroid) on a subset $\mathscr{A}$ of a hyperbola $\cC$ (consisting of six open arcs; see equation (\ref{eqn:arcs}))  to a fixed triangle $ABC$, and where $P_1$ is a fixed point on the hyperbola (an endpoint of one of the arcs making up $\mathscr{A}$).  In another paper \cite{mms} we will show that the locus of points $P$, for which the map $\textsf{M}$ is a half-turn, is also an elliptic curve, which can be synthetically constructed in a similar way using the geometry of the triangle.  \medskip

We adhere to the notation of \cite{mm1}-\cite{mm4}: $P$ is always a point not on the extended sides of the ordinary triangles $ABC$ and $K^{-1}(ABC)$; $D_0E_0F_0=K(ABC)$ is the medial triangle of $ABC$, with $D_0$ on $BC$, $E_0$ on $CA$, $F_0$ on $AB$ (and the same for further points $D_i,E_i,F_i$); $DEF$ is the cevian triangle associated to $P$; $D_2E_2F_2$ the cevian triangle for $Q=K \circ \iota(P)=K(P')$; $D_3E_3F_3$ the cevian triangle for $P'=\iota(P)$.  As above, $T_P$ and $T_{P'}$ are the unique affine maps taking triangle $ABC$ to $DEF$ and $D_3E_3F_3$, respectively, and $\lambda=T_{P'} \circ T_P^{-1}$.  See \cite{mm1} and \cite{mm2} for the properties of these maps.  \medskip

We also refer to the papers \cite{mm1}, \cite{mm2}, \cite{mm3}, and \cite{mm4} as I, II, III, and IV respectively.  See \cite{ac}, \cite{co1}, \cite{co2} for results and definitions in triangle geometry and projective geometry.

\end{section}

\begin{section}{The special case $H=A, O=D_0$.}

We now consider the set of all points $P$ such that $H=A$ and $O=K(H)=K(A)=D_0$.  We start with a lemma.

\begin{lem}
\label{lem:EquivH=A}
Provided the generalized orthocenter $H$ of $P$ is defined, the following are equivalent:
\begin{enumerate}[(a)]
\item $H = A$.
\item $QE = AF$ and $QF = AE$.
\item $F_3$ is collinear with $Q$, $E_0$, and $K(E_3)$.
\item $E_3$ is collinear with $Q$, $F_0$, and $K(F_3)$.
\end{enumerate}
\end{lem}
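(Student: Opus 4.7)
The plan is to establish $(a) \Leftrightarrow (b)$ directly from the definition of the generalized orthocenter, then to connect $(a)$ to $(c)$ using the formula $H = K^{-1} \circ T_{P'}^{-1} \circ K(Q)$ of \cite{mmq}, and finally to obtain $(a) \Leftrightarrow (d)$ by the $B \leftrightarrow C$ symmetry of the construction.

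For $(a) \Leftrightarrow (b)$, recall that $H$ is characterized by the three parallelisms $HA \parallel QD$, $HB \parallel QE$, $HC \parallel QF$. If $H = A$, then the line through $B$ parallel to $QE$ is forced to be $AB$ and the line through $C$ parallel to $QF$ is forced to be $AC$, so $AB \parallel QE$ and $AC \parallel QF$. Since $F$ lies on $AB$ and $E$ on $CA$, these parallelisms are precisely the conditions that make $AEQF$ a parallelogram with $A$ and $Q$ at opposite vertices, yielding the equal-and-parallel segments $QE = AF$ and $QF = AE$ of $(b)$. Reversing this reasoning and invoking the uniqueness of $H$ gives the converse.

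For $(a) \Leftrightarrow (c)$, I would first use the formula to observe that $H = A$ is equivalent to $K(Q) = T_{P'}(K(A)) = T_{P'}(D_0)$. Since $T_{P'}$ is affine with $T_{P'}(B) = E_3$ and $T_{P'}(C) = F_3$, it sends the midpoint $D_0$ of $BC$ to the midpoint of $E_3 F_3$. Hence $(a)$ is equivalent to: $K(Q)$ is the midpoint of $E_3 F_3$. To turn this midpoint condition into the four-point collinearity of $(c)$, place $A$ at the origin and write $F = sB$, $E = tC$; then $F_3 = (1-s)B$, $E_3 = (1-t)C$, $E_0 = \tfrac{1}{2}C$, and $K(E_3) = \tfrac{1}{2}B + \tfrac{t}{2}C$ (using $G = \tfrac{1}{3}(B+C)$). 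A short determinant computation shows that each of the two conditions ``$Q$ lies on the line $F_3 E_0$'' and ``$K(E_3)$ lies on the line $F_3 E_0$'' reduces to the single scalar equation $2(s+t-st) = 1$, which in turn is equivalent to the parallelogram relation $Q = sB + tC$ of $(b)$.

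The equivalence $(a) \Leftrightarrow (d)$ then follows by interchanging $B$ and $C$ (and hence $E \leftrightarrow F$, $E_3 \leftrightarrow F_3$, $E_0 \leftrightarrow F_0$) in the argument for $(c)$. The main technical obstacle is the step where two a priori independent linear constraints — the collinearities of $Q$ and of $K(E_3)$ with the chord $F_3 E_0$ — collapse to one and the same equation in $s$ and $t$; verifying this coincidence, and matching it with the locus equation extracted from $(b)$, is the heart of the calculation.
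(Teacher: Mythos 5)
Your proposal is correct in outline, but it reaches (c) and (d) by a genuinely different route from the paper. The paper's argument is entirely synthetic: it gets (a) $\Leftrightarrow$ (b) from the parallelogram $AFQE$ exactly as you do, then derives (b) $\Rightarrow$ (c),(d) from a congruent-triangle argument at the midpoint $K(E_3)$ of $BE$, proves (c) $\Leftrightarrow$ (d) by an identity in the signed ratios $AE_0/E_0E_3$ and $AF_0/F_0F_3$ (via $x\mapsto(x+1)/(x-1)$ being an involution), and closes the loop with the parallelogram $F_3P'E_3Q$ to recover (b) from (c) and (d) jointly. You instead invoke the affine formula $H=K^{-1}\circ T_{P'}^{-1}\circ K(Q)$ and a coordinate computation showing that ``$Q\in F_3E_0$'' and ``$K(E_3)\in F_3E_0$'' each reduce to $2(s+t-st)=1$, which is also the equation expressing $Q=sB+tC$; this makes each of (c) and (d) \emph{individually} equivalent to (b), which is a slightly cleaner logical structure, at the price of an explicit calculation. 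Two caveats. First, the step you correctly flag as the heart of the matter is not quite a ``short determinant computation'': to test whether $Q$ lies on $F_3E_0$ you must first express $Q=K(\iota(P))$ in terms of $s,t$, i.e., pass through $P=BE\cdot CF=(u:v:w)$ with $s=v/(u+v)$, $t=w/(u+w)$ and $Q=(u(v+w):v(u+w):w(u+v))$; doing so, both collinearity conditions and both components of $Q=sB+tC$ do collapse to $uv+uw+vw=u^2$ (the equation of $\overline{\C}_A$), so the plan succeeds. Second, your observation that (a) is equivalent to ``$K(Q)$ is the midpoint of $E_3F_3$'' is correct but ends up unused, since your actual chain is (c) $\Leftrightarrow$ [scalar equation] $\Leftrightarrow$ (b) $\Leftrightarrow$ (a); you could delete it or, alternatively, use it to bypass (b) entirely. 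The passage between the equal-length statement of (b) and the parallelogram $Q=E+F$ is treated as immediate in both your write-up and the paper's, so no complaint there.
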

\begin{proof}
(See Figure \ref{fig:locus}.)  We use the fact that $K(E_3)$ is the midpoint of segment $BE$ and $K(F_3)$ is the midpoint of segment $CF$ from I, Corollary 2.2.  Statement (a) holds iff $QE \pa AB$ and $QF \pa AC$, i.e. iff $AFQE$ is a parallelogram, which is equivalent to (b). Suppose (b) holds. Let $X = BE \cdot QF_3$. Then triangles $BXF_3$ and $EXQ$ are congruent since $QE \pa BF_3 = AB$ and $QE = AF = BF_3$. Therefore, $BX = EX$, i.e. $X$ is the midpoint $K(E_3)$ of $BE$, so $Q, F_3$, and $X = K(E_3)$ are collinear. The fact that $E_0$ is also collinear with these points follows from $K(BP'E_3) = E_0QK(E_3)$ and the collinearity of $B, P', E_3$.  Similarly, $Q, E_3, F_0$, and $K(F_3)$ are collinear. This shows (b) $\Rightarrow$ (c), (d).

Next, we show (c) and (d) are equivalent. Suppose (c) holds. The line $F_3E_0=E_0K(E_3)=K(BE_3)$ is the complement of the line $BE_3$, hence the two lines are parallel and
\begin{equation}
\frac{AF_3}{F_3B} = \frac{AE_0}{E_0E_3}.
\label{eqn:Ratios1}
\end{equation}
Conversely, if this equality holds, then the lines are parallel and $F_3$ lies on the line through $K(E_3)$ parallel to $P'E_3$, i.e. the line $K(P'E_3) = QK(E_3)$, so (c) holds. Similarly, (d) holds if and only if
\begin{equation}
\frac{AE_3}{E_3C} = \frac{AF_0}{F_0F_3}.
\label{eqn:Ratios2}
\end{equation}
A little algebra shows that (\ref{eqn:Ratios1}) holds if and only if (\ref{eqn:Ratios2}) holds.  Using signed distances, and setting $AE_0/E_0E_3 = x$, we have $AE_3/E_3C = (x + 1)/(x - 1)$.  Similarly, if $AF_0/F_0F_3 = y$, then $AF_3/F_3B = (y + 1)/(y - 1)$.  Now  (\ref{eqn:Ratios1}) is equivalent to $x = (y+1)/(y-1)$, which is equivalent to $y = (x+1)/(x-1)$, hence also to (\ref{eqn:Ratios2}).  Thus, (c) is equivalent to (d).  Note that this part of the lemma does not use that $H$ is defined.  \smallskip

Now assume (c) and (d) hold.  We will show (b) holds in this case.  By the reasoning in the previous paragraph, we have $F_3Q \pa E_3P'$ and $E_3Q \pa F_3P'$, so $F_3P'E_3Q$ is a parallelogram. Therefore, $F_3Q = P'E_3 = 2\cdot QK(E_3)$, so $F_3K(E_3) = K(E_3)Q$. This implies the triangles $F_3K(E_3)B$ and $QK(E_3)E$ are congruent (SAS), so $AF = BF_3 = QE$. Similarly, $AE = CE_3 = QF$, so (b) holds.
\end{proof}

\begin{thm}
\label{thm:locus}
The locus $\sL_A$ of points $P$ such that $H = A$ is a subset of the conic $\overline{\C}_A$ through $B, C, E_0$, and $F_0$, whose tangent at $B$ is $K^{-1}(AC)$ and whose tangent at $C$ is $K^{-1}(AB)$. Namely, $\mathscr{L}_A = \overline{\C}_A \sm \{B, C, E_0, F_0\}$.
\end{thm}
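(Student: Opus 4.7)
The plan is to combine Lemma \ref{lem:EquivH=A} with a short barycentric coordinate computation. Write $P=(x:y:z)$ in barycentrics with respect to $ABC$, so $P'=\iota(P)=(yz:xz:xy)$, and extract the traces $E_3=(z:0:x)$ and $F_3=(y:x:0)$. Applying the complement map $K(a:b:c)=(b+c:a+c:a+b)$ gives $K(E_3)=(x:x+z:z)$, while $E_0=K(B)=(1:0:1)$. By Lemma \ref{lem:EquivH=A}(c), $P\in\sL_A$ iff $F_3$ lies on the line through $E_0$ and $K(E_3)$. Expanding the $3\times 3$ collinearity determinant gives the line
\[
-(x+z)X+(x-z)Y+(x+z)Z=0,
\]
and substituting $F_3=(y:x:0)$ collapses the condition to the homogeneous quadratic
\[
x^2-xy-xz-yz=0. \qquad (\ast)
\]
Let $\mathcal{C}$ denote the conic defined by $(\ast)$; then $\sL_A\subset\mathcal{C}$.

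Next, I identify $\mathcal{C}$ with $\overline{\C}_A$. Direct substitution shows $\{B,C,E_0,F_0\}\subset\mathcal{C}$. Using the gradient $(2x-y-z,\,-x-z,\,-x-y)$ of the defining polynomial, the tangent to $\mathcal{C}$ at $B=(0:1:0)$ is $X+Z=0$, which coincides with $K^{-1}(AC)$: indeed $(X:Y:Z)\in K^{-1}(AC)$ iff the middle coordinate of $K(X:Y:Z)=(Y+Z:X+Z:X+Y)$ vanishes, i.e.\ $X+Z=0$. Analogously the tangent at $C$ is $X+Y=0=K^{-1}(AB)$. Since a conic is determined by four points together with a tangent at one of them, $\mathcal{C}=\overline{\C}_A$.

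For the set-theoretic equality $\sL_A=\overline{\C}_A\sm\{B,C,E_0,F_0\}$, recall that Lemma \ref{lem:EquivH=A} applies only when $H$ is defined, i.e.\ when $P$ avoids the extended sides of $ABC$ and of $K^{-1}(ABC)$. The four listed points already lie on sides of $ABC$ ($B,C$ are vertices, $E_0\in CA$, $F_0\in AB$) and so must be excluded. Conversely, I would check $(\ast)$ against each of the six forbidden lines: the sides $x=0$, $y=0$, $z=0$ of $ABC$ meet $\mathcal{C}$ precisely at $\{B,C\}$, $\{C,E_0\}$, $\{B,F_0\}$, while the sides of $K^{-1}(ABC)$ given by $y+z=0$, $x+z=0$, $x+y=0$ contribute no additional real points ($y+z=0$ yields $x^2+y^2=0$, with no real projective solutions; $x+z=0$ and $x+y=0$ recover only $B$ and $C$). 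Hence every $P\in\overline{\C}_A\sm\{B,C,E_0,F_0\}$ is in the domain of $H$ and, by Lemma \ref{lem:EquivH=A}, satisfies $H=A$. The main obstacle I anticipate is this last bookkeeping step of confirming there are no further real exceptions on the anti-medial sides; the remainder is routine barycentric calculation.
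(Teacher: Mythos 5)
Your proof is correct, but it takes a genuinely different route from the paper's. The paper generates $\overline{\C}_A$ synthetically, via the chain of projectivities $BE \barwedge E \barwedge E_3 \doublebarwedge K(E_3) \doublebarwedge F_3 \barwedge F \barwedge CF$ (Steiner's construction), never writing an equation; it then excludes the six forbidden lines one by one, with the case $P \in K^{-1}(BC)$ requiring a separate betweenness/separation argument, and it obtains the tangency at $B$ and $C$ by observing that $K^{-1}(AC)$ and $K^{-1}(AB)$ each meet the conic in only one point. You instead derive the explicit barycentric equation $x^2 = xy+yz+zx$ from Lemma \ref{lem:EquivH=A}(c) --- which agrees with the formula the paper itself quotes from \cite{mmq} right after the theorem --- and then all the incidence, tangency, and exclusion checks become one-line substitutions; in particular the troublesome line $K^{-1}(BC)$ is dispatched by ``$x^2+y^2=0$ has no real projective solutions,'' which is considerably cleaner than the paper's synthetic argument. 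The trade-off is that the paper's proof stays within its stated synthetic program, while yours is shorter and makes the bookkeeping mechanical. Three small points worth tightening: (i) condition (c) of the lemma asserts that $Q$ is \emph{also} on the line $E_0K(E_3)$; you silently drop $Q$, which is harmless because $E_0K(E_3) = K(BE_3)$ is the complement of the cevian line $BP'E_3$, so $Q = K(P')$ lies on it automatically, but this deserves a sentence. (ii) You should note that the quadratic $(\ast)$ is nondegenerate (its symmetric matrix has determinant $-1/2$, or: no conic through the four points $B, C, E_0, F_0$, no three collinear, can be degenerate with the prescribed tangent), so that the uniqueness argument identifying $\mathcal{C}$ with $\overline{\C}_A$ applies. (iii) For $H$ to be defined one also needs $P'=\iota(P)$ and $Q$ to be ordinary, i.e.\ $P$ off the Steiner circumellipse $xy+yz+zx=0$; on $\mathcal{C}$ this would force $x^2=0$ and then $yz=0$, so it only recovers $B$ and $C$ (consistent with the paper's remark that $\overline{\C}_A$ is tangent to $\iota(l_\infty)$ at those two vertices), but the check should be recorded alongside your six-line verification.
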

\begin{proof}
Given $E$ on $AC$ we define $F_3$ as $F_3 = E_0K(E_3) \cdot AB$, and $F$ to be the reflection of $F_3$ in $F_0$.  Then we have the following chain of projectivities ($G$ is the centroid):
\[BE \ \barwedge \ E \ \barwedge \ E_3 \ \stackrel{G}{\doublebarwedge} \ K(E_3) \ \stackrel{E_0}{\doublebarwedge} \ F_3 \ \barwedge \ F \ \barwedge \ CF.\]
Then $P = BE \cdot CF$ varies on a line or a conic. From the lemma it follows that: (a) for a point $P$ thus defined, $H = A$; and (b) if $H = A$ for some $P$, then $P$ arises in this way, i.e. $F_3$ is on $E_0K(E_3)$.  \smallskip

Now we list four cases in the above projectivity for which $H$ is undefined, namely when $P = B, C, E_0, F_0$.  Let $A_\infty, B_\infty, C_\infty$ represent the points at infinity on the respective lines $BC, AC$, and $AB$.  \smallskip

\begin{enumerate}[1.]
\item For $E = B_\infty = E_3 = K(E_3)$, we have $E_0K(E_3) = AC$ so $F_3 = A, F = B$, and $P = BE \cdot CF = B$.
\item For $E = C$, we have $E_3 = A, K(E_3) = D_0, E_0K(E_3) = D_0E_0 \pa AB, F = F_3 = C_\infty$, so $P = BE\cdot CF = C$.
\item For $E = E_0$, we have $E_3 = E_0$ and $K(E_0)$ is the midpoint of $BE_0$, so $F_3 = B, F = A$, and $P = BE\cdot CF = E_0$.
\item For $E = A$, we have $E_3 = C, K(E_3) = F_0, F_3 = F = F_0$, and $P = BE\cdot CF = F_0$.
\end{enumerate}

\begin{figure}
\[\includegraphics[width=5.5in]{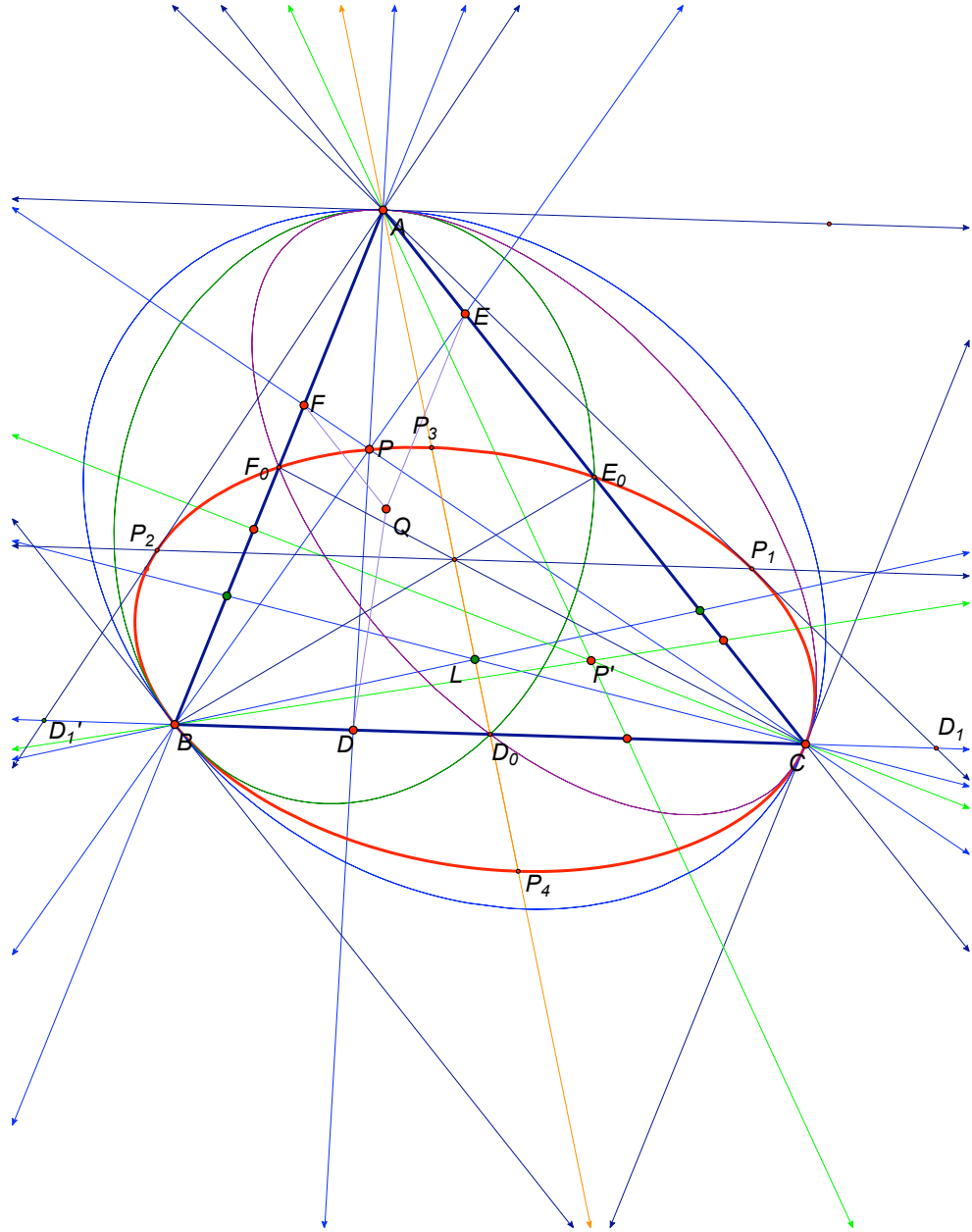}\]
\caption{The conics $\overline{\C}_A$ (red), $\overline{\C}_B$ (purple), $\overline{\C}_C$ (green), and $\iota(l_\infty)$ (blue).}
\label{fig:locus}
\end{figure}

Since the four points $B, C, E_0, F_0$ are not collinear, this shows that the locus of points $P=BE \cdot CF$ is a conic $\overline{\C}_A$ through $B, C, E_0, F_0$.  Moreover, the locus $\mathscr{L}_A$ of points $P$ such that $H = A$ is a subset of $\overline{\C}_A  \sm \{B, C, E_0, F_0\}$. \smallskip

We claim that if $E$ is any point on line $AC$ other than $A, C, E_0$, or $B_\infty$, then $P$ is a point for which $H$ is well-defined. First, $E_3$ is an ordinary point because $E \ne B_\infty$. Second, because $E \ne B_\infty$, the line $E_0K(E_3)$ is not a sideline of $ABC$. The line $E_0K(E_3)$ intersects $AB$ in $A$ if and only if $K(E_3)$ lies on $AC$, which is true only if $E_3 = B_\infty$. The line $E_0K(E_3)$ intersects $AB$ in $B$ iff $K(E_3)$ is on $BE_0$, which holds iff $E_3$ is on $K^{-1}(B)B = BE_0$, and this is the case exactly when $E = E_3 = E_0$.  Since $K(E_3)$ lies on $K(AC)=D_0F_0$, the line $E_0K(E_3)$ is parallel to $AB$ iff $K(E_3)=D_0$, giving $E_3=A$ and $E=C$. Thus, the line $E_0K(E_3)$ intersects $AB$ in an ordinary point which is not a vertex, so $F_3$ and $F$ are not vertices and $P=BE\cdot CF$ is a point not on the sides of $ABC$. \smallskip

It remains to show that $P$ does not lie on the sides of the anticomplementary triangle of $ABC$. If $P$ is on $K^{-1}(AB)$ then $F=F_3 = C_\infty$, which only happens in the excluded case $E=C$ (see Case 2 above). If $P$ is on $K^{-1}(AC)$ then $E= B_\infty$, which is also excluded. If $P$ is on $K^{-1}(BC)$ then $P'$ is also on $K^{-1}(BC)$ so $Q=K(P')$ is on $BC$.  \smallskip

To handle the last case, suppose $Q$ is on the same side of $D_0$ as $C$. Then $P'$ is on the opposite side of line $AD_0$ from $C$, so it is clear that $CP'$ intersects $AB$ in the point $F_3$ between $A$ and $B$.  If $Q$ is between $D_0$ and $C$, then $F_3$ is between $A$ and $F_0$ (since $F_0, C,$ and $G$ are collinear), and it is clear that $F_3E_0$ can only intersect $BC$ in a point outside of the segment $D_0C$, on the opposite side of $C$ from $Q$.  But this is a contradiction, since by construction $F_3, E_0$, and $K(E_3)$ are collinear, and $Q=K(P')$ lies on $K(BE_3)=E_0K(E_3)$.  On the other hand, if the betweenness relation $D_0 * C * Q$ holds, then $F_3$ is between $B$ and $F_0$, and it is clear that $F_3E_0$ can only intersect $BC$ on the opposite side of $B$ from $C$.  This also applies when $P'=Q$ is a point on the line at infinity, since then $F_3=B$, and $B, E_0$ and $Q=A_\infty$ (the point at infinity on $BC$) are not collinear, contradicting part (c) of Lemma \ref{lem:EquivH=A}.   A symmetric argument applies if $Q$ is on the same side of $D_0$ as $B$, using the fact that parts (c) and (d) of Lemma \ref{lem:EquivH=A} are equivalent. Thus, no point $P$ in $\overline{\C}_A  \sm \{B, C, E_0, F_0\}$ lies on a side of $ABC$ or its anticomplementary triangle, and the point $H$ is well-defined; further, $H=A$ for all of these points.  \smallskip

Finally, by the above argument, there is only one point $P$ on $\overline{\C}_A$ that is on the line $K^{-1}(AB)$, namely $C$, and there is only one point $P$ on $\overline{\C}_A$ that is on the line $K^{-1}(AC)$, namely $B$, so these two lines are tangents to $\overline{\C}_A$.
\end{proof}

This theorem shows that the locus of points $P$, for which the generalized orthocenter $H$ is a vertex of $ABC$, is the union of the conics $\overline{\C}_A \cup \overline{\C}_B \cup \overline{\C}_C$ minus the vertices and midpoints of the sides.  The Steiner circumellipse is tangent to the sides of the anticomplementary triangle $K^{-1}(ABC)$,  so the conic $\overline{\C}_A$, for instance, has the double points $B, C$ in common with $\iota(l_\infty)$.  Since the conic $\overline{\C}_A$ lies on the midpoints $E_0$ and $F_0$, which lie inside $\iota(l_\infty)$, it follows from Bezout's theorem that the set $\overline{\C}_A - \{B,C\}$ lies entirely in the interior of $\iota(l_\infty)$, with similar statements for $\overline{\C}_B$ and $\overline{\C}_C$.  \medskip

In the next proposition and its corollary, we consider the special case in which $H=A$ and $D_3$ is the midpoint of $AP'$.  We will show that, in this case, the map $\textsf{M}$ is a translation.  (See Figure \ref{fig:2.2}.)  We first show that this situation occurs.  \medskip

\begin{lem}
\label{lem:equilateral}
If the equilateral triangle $ABC$ has sides of length $2$, then there is a point $P$ with $AP \cdot BC=D$ and $d(D_0,D)=\sqrt{2}$, such that $D_3$ is the midpoint of the segment $AP'$ and $H=A$.
\end{lem}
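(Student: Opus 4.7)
The plan is to locate $P$ explicitly by combining the midpoint condition with the equation of the locus conic $\overline{\C}_A$ from Theorem~\ref{thm:locus}. I would place the equilateral triangle in Cartesian coordinates with $D_0$ at the origin, $B=(-1,0)$, $C=(1,0)$, and $A=(0,\sqrt{3})$, and use barycentric coordinates $(\alpha:\beta:\gamma)$ with respect to $ABC$ for all algebraic manipulations.

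First I would derive an equation for $\overline{\C}_A$ in barycentrics. Any conic through $B=(0:1:0)$, $C=(0:0:1)$, $E_0=(1:0:1)$, and $F_0=(1:1:0)$ has the form $\alpha^2-\alpha\beta-\alpha\gamma+k\beta\gamma=0$ for some scalar $k$. Using that the complement map sends $(x,y,z)\mapsto(y+z,z+x,x+y)$, one finds $K^{-1}(AC)$ has equation $\alpha+\gamma=0$; equating this with the tangent to the conic at $B$ (computed via the polar line) forces $k=-1$, giving
\[
\overline{\C}_A:\quad \alpha^2 - \alpha\beta - \alpha\gamma - \beta\gamma = 0.
\]

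Next I would translate the midpoint hypothesis into coordinates. Parametrize $D=(d,0)$ on $BC$ by the signed distance $d=d(D_0,D)$; then $D_3=(-d,0)$ since $D_3$ is the reflection of $D$ across $D_0$. The condition that $D_3$ be the midpoint of $AP'$ then forces $P'=2D_3-A=(-2d,-\sqrt{3})$. Converting to barycentrics with respect to $ABC$ gives $P'=(-1:1+d:1-d)$, and taking isotomic conjugates yields $P=\iota(P')=(d^2-1:\,1-d:\,1+d)$.

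Finally I substitute this $P$ into the conic equation. Writing $\alpha=d^2-1$, $\beta=1-d$, $\gamma=1+d$, one checks $\beta+\gamma=2$ and $\beta\gamma=1-d^2=-\alpha$, so
\[
\alpha^2-\alpha\beta-\alpha\gamma-\beta\gamma \;=\; \alpha^2-2\alpha+\alpha \;=\; \alpha(\alpha-1) \;=\; (d^2-1)(d^2-2).
\]
The root $d^2=1$ gives the excluded traces $D\in\{B,C\}$, while $d^2=2$ yields the desired $d(D_0,D)=\sqrt{2}$. Since the resulting point $P=(1:1-\sqrt{2}:1+\sqrt{2})$ does not lie in $\{B,C,E_0,F_0\}$, Theorem~\ref{thm:locus} guarantees that $H$ is defined and equal to $A$. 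The only real obstacle is the derivation of the conic equation and its tangents; once these are in hand, the remainder is straightforward algebra.
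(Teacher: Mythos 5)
Your proof is correct, but it takes a genuinely different route from the paper's. The paper proves the lemma by a direct metric computation inside the equilateral triangle: it constructs $P'$ on $K^{-2}(BC)$ with $P'\tilde A = 2\sqrt2$, uses similar triangles to show the trace $F_3$ satisfies $BF_3=\sqrt2$, deduces that $AQF$ is equilateral so that $QF\parallel AC$, and concludes $O=D_0$, i.e.\ $H=A$ — all without invoking Theorem \ref{thm:locus}. You instead reduce everything to Theorem \ref{thm:locus}: you derive the barycentric equation $\alpha^2-\alpha\beta-\alpha\gamma-\beta\gamma=0$ of $\overline{\C}_A$ from its four points and the tangent $K^{-1}(AC)$ at $B$ (your polar computation giving $k=-1$ is right, and there is no circularity since the tangency is established in the proof of Theorem \ref{thm:locus}, which precedes this lemma), encode the midpoint condition as $P'=(-1:1+d:1-d)$, and find that membership of $P=\iota(P')$ in $\overline{\C}_A$ factors as $(d^2-1)(d^2-2)=0$. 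This buys you something the paper's proof does not state: $d=\pm\sqrt2$ are the \emph{only} admissible values, so the two points in \eqref{eqn:P} are characterized, not merely exhibited; it is also shorter once the conic equation is in hand. What it gives up is the paper's synthetic, coordinate-free flavor and the explicit geometric data ($AQ=BF_3=\sqrt2$, $AQF$ equilateral) that the figure and the later discussion reuse. Two small points you should make explicit: the pencil of conics through $B,C,E_0,F_0$ also contains the degenerate member $\beta\gamma=0$, so you should note that the tangency condition rules it out before normalizing the leading coefficient to $1$; and you should cite that Theorem \ref{thm:locus} already guarantees every point of $\overline{\C}_A\setminus\{B,C,E_0,F_0\}$ avoids the sides of $ABC$ and $K^{-1}(ABC)$, so that $H$ is defined at your $P=(1:1-\sqrt2:1+\sqrt2)$.
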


\begin{figure}
\[\includegraphics[width=5in]{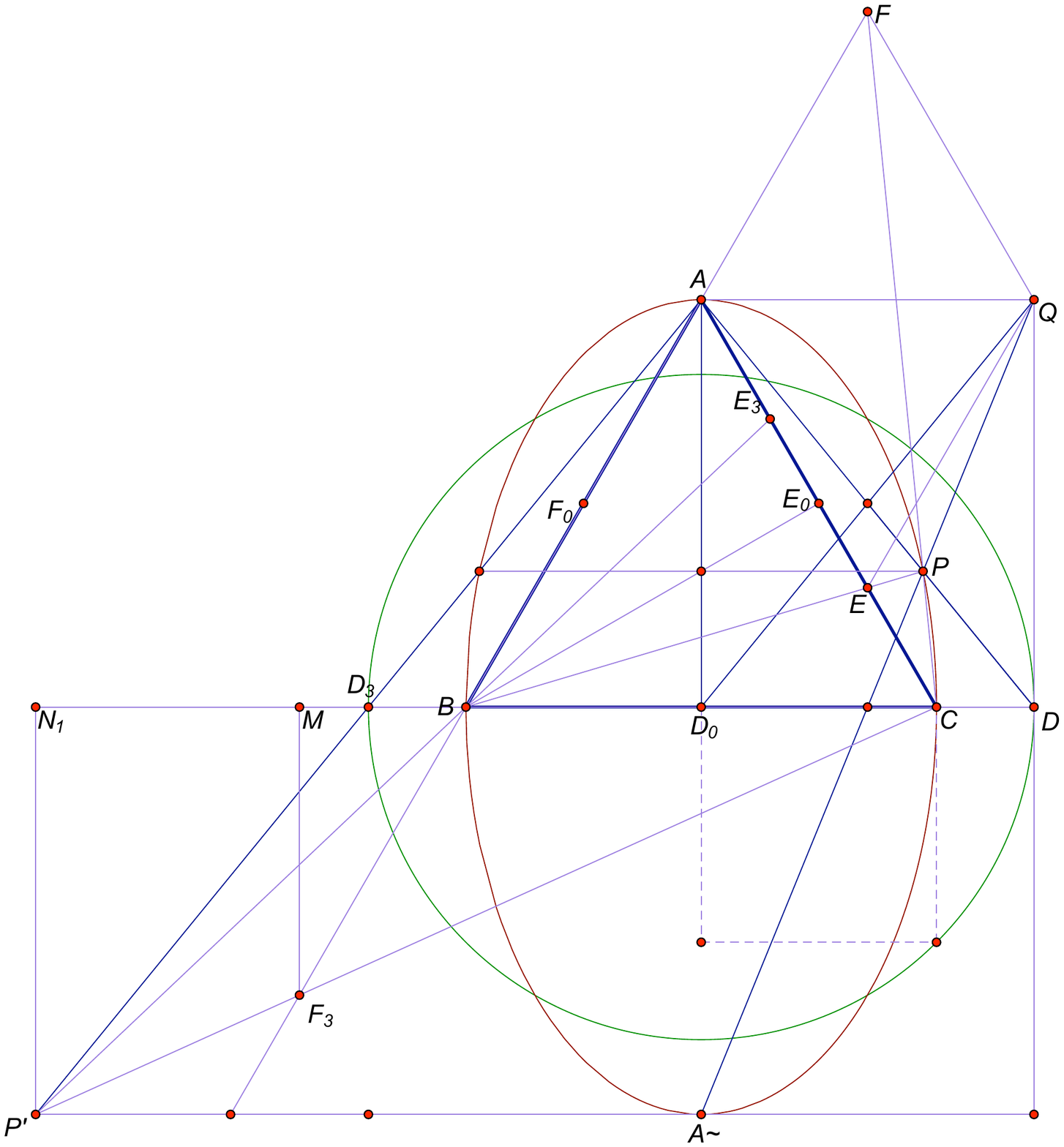}\]
\caption{Proof of Lemma \ref{lem:equilateral}}
\label{fig:2.1}
\end{figure}

\begin{proof}
(See Figure \ref{fig:2.1}.) We will construct $P'$ such that $D_3$ is the midpoint of $AP'$ and $H=A$, and then show that $P$ satisfies the hypothesis of the lemma.  The midpoint $D_0$ of $BC$ satisfies $D_0B = D_0C = 1$ and $AD_0 = \sqrt3$. Let the triangle be positioned as in Figure \ref{fig:2.1}.  Let $\tilde A$ be the reflection of $A$ in $D_0$, and let $D$ be a point on $BC$ to the right of $C$ such that $D_0D = \sqrt{2}$. In order to ensure that the reflection $D_3$ of $D$ in $D_0$ is the midpoint of $AP'$, take $P'$ on $l=K^{-2}(BC)$ with $P'\tilde A = 2\sqrt2$ and $P'$ to the left of $\tilde A$. Then $Q = K(P')$ is on $K^{-1}(BC)$, to the right of $A$, and $AQ = \sqrt{2}$. Let $E_3$ and $F_3$ be the traces of $P'$ on $AC$ and $BC$, respectively.  \smallskip

We claim $BF_3 = \sqrt{2}$. Let $M$ be the intersection of $BC$ and the line through $F_3$ parallel to $AD_0$. Then triangles $BMF_3$ and $BD_0A$ are similar, so $F_3M = \sqrt{3} \cdot MB$. Let $N_1$ be the intersection of $BC$ and the line through $P'$ parallel to $AD_0$. Triangles $P'N_1C$ and $F_3MC$ are similar, so
\[\frac{F_3M}{MC} = \frac{P'N_1}{N_1C} = \frac{AD_0}{P'\tilde A + 1} = \frac{\sqrt{3}}{2\sqrt{2} + 1}.\]
Therefore,
\[\frac{\sqrt{3}}{2\sqrt{2} + 1} = \frac{F_3M}{MC} = \frac{\sqrt{3} \cdot MB}{MB + 2}\]
which yields that $MB = 1/\sqrt{2}$. Then $BF_3=\sqrt{2}$ is clear from similar triangles. \smallskip

Now, let $F$ be the reflection of $F_3$ in $F_0$ (the midpoint of $AB$). Then $AQF$ is an equilateral triangle because $m(\angle FAQ)=60^\circ$ and $AQ \cong BF_3 \cong AF$, so $\angle AQF \cong \angle AFQ$. Therefore, $QF \pa AC$. It follows that the line through $F_0$ parallel to $QF$ is parallel to $AC$, hence is a midline of triangle $ABC$ and goes through $D_0$.  Hence, the point $O$, which is the intersection of the lines through $D_0, E_0, F_0$, parallel to $QD, QE, QF$, respectively, must be $D_0$, giving that $H=K^{-1}(O)=A$.  Clearly, $P=AD \cdot CF$ is a point outside the triangle $ABC$, not lying on an extended side of $ABC$ or its anticomplementary triangle, which satisfies the conditions of the lemma.
\end{proof}

The next proposition deals with the general case, and shows that the point $P$ we constructed in the lemma  lies on a line through the centroid $G$ parallel to $BC$.  In this proposition and in the rest of the paper, we will use various facts about the center $Z$ of the cevian conic $\cC_P=ABCPQ$, which we studied in detail in the papers \cite{mm2} and \cite{mm3}.  Recall that $Z$ lies on the nine-point conic $\Nh$.  We also recall the definition of the affine reflection $\eta$ from II, p. 27, which fixes the line $GV$, with $V = PQ \cdot P'Q'$, and moves points parallel to the line $PP'$.

\begin{prop}
\label{prop:HA}
Assume that $H=A, O=D_0$, and $D_3$ is the midpoint of $AP'$.  Then the circumconic $\tilde{\C}_O = \iota(l)$, where $l=K^{-1}(AQ)=K^{-2}(BC)$ is the line through the reflection $\tilde{A}$ of $A$ in $O$ parallel to the side $BC$.  The points $O, O', P, P'$ are collinear, with $d(O,P')=3d(O,P)$, and the map $\textsf{M}$ taking $\tilde{\C}_O$ to the inconic $\mathcal{I}$ is a translation.  In this situation, the point $P$ is one of the two points in the intersection $l_G \cap \tilde{\C}_O$, where $l_G$ is the line through the centroid $G$ which is parallel to $BC$.
\end{prop}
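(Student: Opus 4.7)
My plan has three stages: identify the line $l$, prove $\tilde{\C}_O = \iota(l)$, and derive the remaining statements (translation, collinearity, ratio, and $P \in l_G$). For Stage 1, since $D_3$ lies on $BC$ and is the midpoint of $AP'$, we have $P' = 2D_3 - A = \tilde A + 2(D_3 - D_0)$, with $D_3 - D_0$ parallel to $BC$. Hence $P'$ lies on the line through $\tilde A$ parallel to $BC$. As $K^{-1}$ is a dilation centered at $G$ that preserves directions and sends $A$ to $\tilde A$, this line is $K^{-2}(BC) = l$. Then $Q = K(P') \in K^{-1}(BC)$, the line through $A$ parallel to $BC$, so $AQ = K^{-1}(BC)$ and $K^{-1}(AQ) = l$.

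For Stage 2, I would prove $\tilde{\C}_O = \iota(l)$ by showing both are the unique circumconic of $ABC$ tangent to $l$ at $\tilde A$. For $\iota(l)$: the point $\tilde A = (-1:1:1)$ is fixed by $\iota$ and lies on $l$, so $\tilde A \in \iota(l)$; the polar of $\tilde A$ with respect to $\iota(l)$ (whose equation $2\beta\gamma + \alpha\gamma + \alpha\beta = 0$ is obtained from $l: 2\alpha + \beta + \gamma = 0$) comes out to $2\alpha + \beta + \gamma = 0 = l$, so $l$ is tangent at $\tilde A$. For $\tilde{\C}_O$: central symmetry through $O = D_0$ puts $\tilde A$ on $\tilde{\C}_O$. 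Using $\tilde{\C}_O = T_{P'}^{-1}(\Npp)$ and the fact from earlier papers that $T_{P'}(Q) = P'$, the tangent to $\tilde{\C}_O$ at $A$ is the $T_{P'}^{-1}$-image of the tangent to $\Npp$ at $D_3$. Under our hypothesis, $D_3$ is both the midpoint of $AP'$ and the diagonal point $AP' \cdot BC$; these two defining conditions of $\Npp$ coincide at $D_3$, so the line $AP'$ meets $\Npp$ doubly at $D_3$ and is thus tangent to $\Npp$ there. Pulling back by $T_{P'}^{-1}$ (sending $A \mapsto A$ and $P' \mapsto Q$), the tangent to $\tilde{\C}_O$ at $A$ is $AQ$, which is parallel to $BC$; by central symmetry, the tangent at $\tilde A$ is parallel to $AQ$ and passes through $\tilde A$, hence is the line $l$. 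By uniqueness, $\tilde{\C}_O = \iota(l)$.

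Since $P = \iota(P') \in \iota(l) = \tilde{\C}_O$, the equivalence stated in the introduction (proved in Theorem \ref{thm:trans}) gives that $\textsf{M}$ is a translation; alternatively this can be verified directly by computing that the linear parts of $T_P$, $K^{-1}$, $T_{P'}$ compose to the identity. In barycentric coordinates, setting $P' = (-1:\beta':\gamma')$, the condition $P' \in l$ gives $\beta' + \gamma' = 2$, and substituting $P = \iota(P') = (\beta'\gamma' : -\gamma' : -\beta')$ into the equation $\alpha^2 - \alpha(\beta+\gamma) - \beta\gamma = 0$ of $\overline{\C}_A$ forces $\beta'\gamma' = -1$, so $\{\beta', \gamma'\} = \{1 \pm \sqrt{2}\}$. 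This gives $P = (1 : 1 \mp \sqrt{2} : 1 \pm \sqrt{2})$, which satisfies the equation $-2\alpha + \beta + \gamma = 0$ of $l_G$; so $P \in l_G \cap \tilde{\C}_O$, one of the two points of this intersection. A direct vector computation then confirms $P' - O = -3(P - O)$ and $O' - O = -2(P - O)$, establishing the collinearity of $O, O', P, P'$ and the ratio $d(O, P') = 3\,d(O, P)$. The expected main obstacle is identifying the tangent to $\Npp$ at $D_3$: the key observation is that the coincidence of the midpoint of $AP'$ with the diagonal point $D_3$ is precisely the condition for the line $AP'$ to be tangent to $\Npp$ at $D_3$.
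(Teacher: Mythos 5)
Your Stages 1 and 2 track the paper's opening moves closely (identifying $l$, placing $\tilde{A}$ on both $\tilde{\C}_O$ and $\iota(l)$, and proving equality of the two circumconics via a common tangent at $\tilde{A}$), and your barycentric verifications --- the polar computation showing $l$ tangent to $\iota(l)$ at $\tilde A$, and the determination of the coordinates of $P$ --- are correct. But the step you yourself flag as the main obstacle is a genuine gap: from the coincidence of the midpoint of $AP'$ with the diagonal point $D_3$ you conclude that $AP'$ ``meets $\Npp$ doubly at $D_3$.'' That does not follow as stated. A line meets a conic in at most two points; generically the midpoint and the diagonal point are those two points, but when they coincide you only know \emph{one} point of $AP' \cap \Npp$, and nothing yet rules out a second, distinct intersection. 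To make this rigorous you need either a continuity/specialization argument (the degree-two intersection divisor of $AP'$ with $\Npp$ varies continuously with $P'$, and both of its points tend to $D_3$) or a direct computation of the tangent to $\Npp$ at $D_3$. The paper sidesteps $\Npp$ entirely here by quoting III, Corollary 3.5, which says the tangent to $\tilde{\C}_O$ at $T_{P'}^{-1}(D_0)=\tilde A$ is parallel to $BC$; citing that closes the gap immediately. (Also a small slip: $T_{P'}^{-1}$ sends $D_3\mapsto A$ and $P'\mapsto Q$, not $A\mapsto A$; the line $AP'=D_3P'$ still maps to $AQ$, so your conclusion stands.)

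For the second half your route genuinely diverges from the paper's, and modulo two caveats it works. The paper argues synthetically: $P\in\tilde\C_O$ forces $P=\textsf{R}_O K^{-1}(Z)$ by III, Theorem 3.14, whence $P,O,P'$ are collinear, the reflection $\eta$ puts $O'$ on $PP'$, and a chain of congruences yields $OO'\cong QQ'$, the parallelogram $OQQ'O'$, and hence the translation, with $P\in l_G$ extracted at the very end from $T_P(O'Q')\pa T_P(OQ)$. You instead solve for $P$ explicitly in barycentrics from the equations of $\overline{\C}_A$ (encoding $H=A$) and of $l$ (encoding the hypothesis on $D_3$), land on the points of (\ref{eqn:P}), and verify the remaining claims by coordinates; this is legitimate and arguably more transparent, but (i) invoking Theorem \ref{thm:trans} for the translation is a forward reference --- it is non-circular, since that theorem's proof does not rely on this proposition, but you should say so explicitly, and the proposition is meant to stand on its own in the paper's logical order; and (ii) the relation $O'-O=-2(P-O)$ is asserted rather than computed --- it requires the barycentric formula for $O'=T_P^{-1}\circ K(Q')$, which you must derive or cite before you can claim the collinearity of all four points $O,O',P,P'$ rather than just of $O,P,P'$.
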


\begin{proof}
(See Figure \ref{fig:2.2}.)  Since the midpoint $R_1'$ of segment $AP'$ is $D_3$, lying on $BC$, $P'$ lies on the line $l$ which is the reflection of $K^{-1}(BC)$ (lying on $A$) in the line $BC$.  It is easy to see that this line is $l=K^{-2}(BC)$, and hence $Q=K(P')$ lies on $K^{-1}(BC)$.  From I, Corollary 2.6 we know that the points $D_0, R_1'=D_3$, and $K(Q)$ are collinear.  Since $K(Q)$ is the center of the conic $\Npp$ (the nine-point conic of quadrilateral $ABCP'$; see III, Theorem 2.4), which lies on $D_0$ and $D_3$, $K(Q)$ is the midpoint of segment $D_0D_3$ on $BC$.  Applying the map $T_{P'}^{-1}$ gives that $O=T_{P'}^{-1}(K(Q))$ is the midpoint of $T_{P'}^{-1}(D_3D_0)=AT_{P'}^{-1}(D_0)$.  It follows that $T_{P'}^{-1}(D_0)=\tilde{A}$ is the reflection of $A$ in $O$, so that $\tilde{A} \in \tilde{\C}_O$.  Moreover, $K(A)=O$, so $\tilde{A}=K^{-1}(A)$ lies on $l=K^{-1}(AQ) \pa BC$.   \smallskip

Next we show that $\tilde{\C}_O = \iota(l)$, where the image $\iota(l)$ of $l$ under the isotomic map is a circumconic of $ABC$ (see Lemma 3.4 in \cite{mm4}).  It is easy to see that $\iota(\tilde{A})= \tilde{A}$, since $\tilde{A} \in AG$ and $AB\tilde{A}C$ is a parallelogram.  Therefore, both conics $\tilde{\C}_O$ and $\iota(l)$ lie on the $4$ points $A,B,C, \tilde{A}$.  To show they are the same conic, we show they are both tangent to the line $l$ at the point $\tilde{A}$.  From III, Corollary 3.5 the tangent to $\tilde{\C}_O$ at $\tilde{A}=T_{P'}^{-1}(D_0)$ is parallel to $BC$, and must therefore be the line $l$.  To show that $l$ is tangent to $\iota(l)$, let $L$ be a point on $l \cap \iota(l)$.  Then $\iota(L) \in l \cap \iota(l)$.  If $\iota(L) \neq L$, this would give three distinct points, $L, \iota(L)$, and $\tilde{A}$, lying on the intersection $l \cap \iota(l)$, which is impossible.   Hence, $\iota(L)=L$, giving that $L$ lies on $AG$ and therefore $L=\tilde{A}$.  Hence, $\tilde{A}$ is the only point on $l \cap \iota(l)$, and $l$ is the tangent line.  This shows that $\tilde{\C}_O$ and $\iota(l)$ share $4$ points and the tangent line at $\tilde{A}$, proving that they are indeed the same conic.  \smallskip

From this we conclude that $P=\iota(P')$ lies on $\tilde{\C}_O$.  Hence, $P$ is the fourth point of intersection of the conics $\tilde{\C}_O$ and $\Cp=ABCPQ$.  From III, Theorem 3.14 we deduce that $P= \tilde{Z}=\textsf{R}_OK^{-1}(Z)$, where $\textsf{R}_O$ is the half-turn about $O$; and we showed in the proof of that theorem that $\tilde{Z}$ is a point on the line $OP'$.  Hence, $P, O, P'$ are collinear, and applying the affine reflection $\eta$ gives that $O'=\eta(O)$ lies on the line $PP'$, as well (see III, Theorem 2.4).  Now, $Z$ is the midpoint of $HP=AP$, since $\textsf{H}=K \circ \textsf{R}_O$ is a homothety with center $H=A$ and similarity factor $1/2$.  Since $Z$ lies on $GV$, where $V=PQ \cdot P'Q'$ (II, Prop. 2.3), it is clear that $P$ and $Q$ are on the opposite side of the line $GV$ from $P', Q'$, and $A$.  The relation $K(\tilde{A})=A$ means that $\tilde{A}$ and also $O$ are on the opposite side of $GV$ from $A$ and $O'$.  Also, $J=K^{-1}(Z)=\textsf{R}_O(\tilde{Z})=\textsf{R}_O(P)$ lies on the line $GV$ and on the conic $\tilde{\C}_O$.  This implies that $O$ lies between $J$ and $P$, and applying $\eta$ shows that $O'$ lies between $J$ and $P'$.  Hence, $OO'$ is a subsegment of $PP'$, whose midpoint is exactly $J=K^{-1}(Z)$, since this is the point on $GV$ collinear with $O$ and $O'$.  Now the map $\eta$ preserves distances along lines parallel to $PP'$ (see II, p. 27), so $JO' \cong JO \cong OP \cong O'P'$, implying that $OO'$ is half the length of $PP'$.  Furthermore, segment $QQ'=K(PP')$ is parallel to $PP'$ and half as long.  Hence, $OO' \cong QQ'$, which implies that $OQQ'O'$ is a parallelogram.  Consequently, $OQ \pa O'Q'$, and III, Theorems 3.4 and 3.9 show that $\textsf{M}$ is a translation.  Thus, the circumconic $\tilde{\C}_O$ and the inconic $\mathcal{I}$ are congruent in this situation.  This argument implies the distance relation $d(O,P') =3d(O,P)$.  \smallskip

The relation $O'Q' \pa OQ$ implies, finally, that $T_P(O'Q') \pa T_P(OQ)$, or $K(Q')P \pa A_0Q = AQ$, since $O'=T_P^{-1}K(Q')$ from \cite{mmq}, Theorem 6; $T_P(Q')=P$ from I, Theorem 3.7; $T_P(O)=T_P(D_0)=A_0$; and $A_0$ is collinear with $A$ and the fixed point $Q$ of $T_P$ by I, Theorem 2.4.  Hence, $PG=PQ'=PK(Q')$ is parallel to $AQ$ and $BC$. \smallskip
\end{proof}

\begin{figure}
\[\includegraphics[width=5.5in]{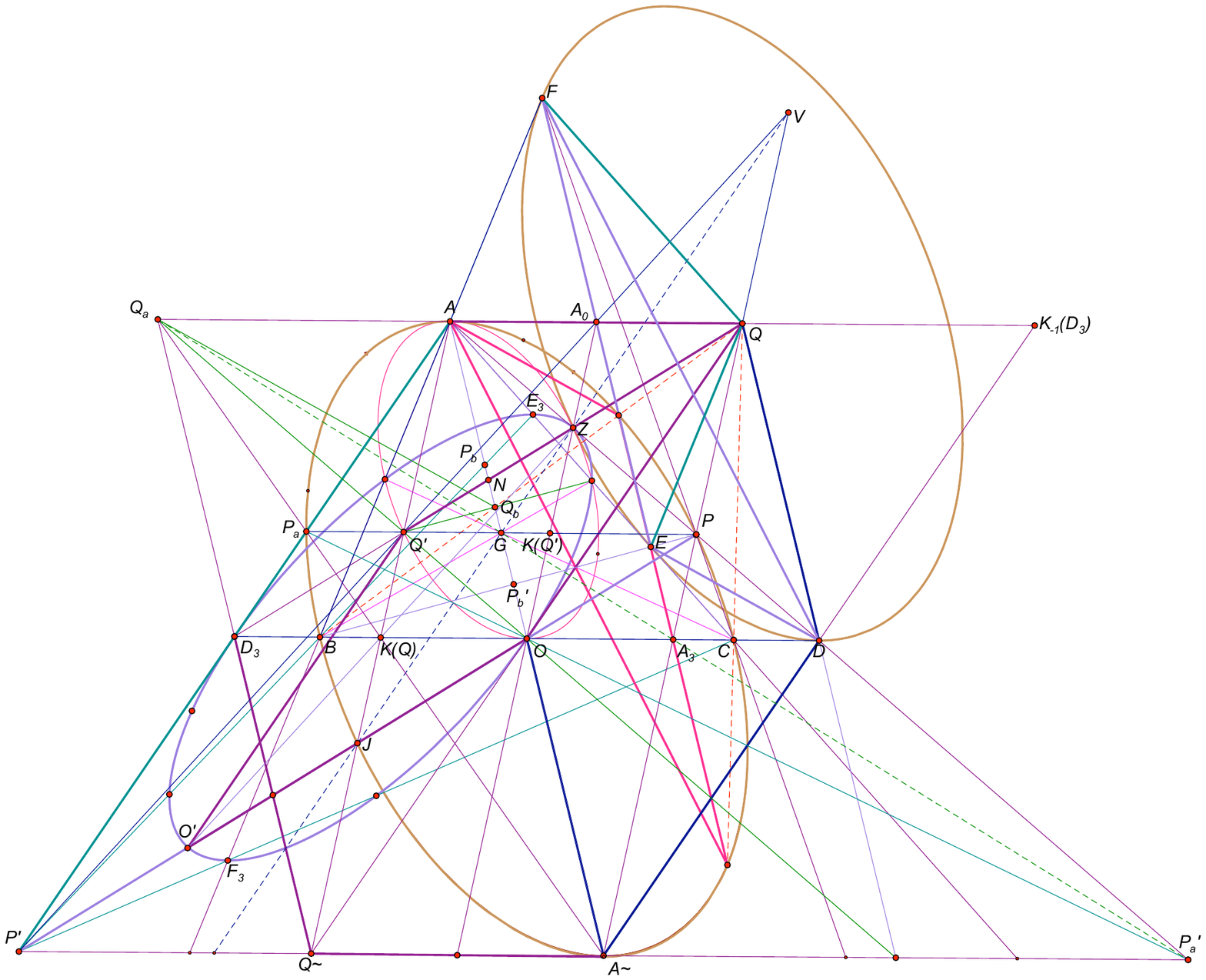}\]
\caption{The case $H=A, O=D_0$, and midpoint of $AP'=D_3$.}
\label{fig:2.2}
\end{figure}

There are many interesting relationships in the diagram of Figure \ref{fig:2.2}.  We point out several of these relationships in the following corollary.

\begin{cor}
\label{cor:HArel} Assume the hypotheses of Proposition \ref{prop:HA}.
\begin{enumerate}[a)]
\item If $Q_a$ is the vertex of the anticevian triangle of $Q$ (with respect to $ABC$) opposite the point $A$, then the corresponding point $P_a$ is the second point of intersection of the line $PG$ with $\tilde{\C}_O$.
\item The point $A_3=T_P(D_3)$ is the midpoint of segment $OD$ and $P$ is the centroid of triangle $ODQ$.
\item The ratio $\frac{OD}{OC}=\sqrt{2}$.
\end{enumerate}
\end{cor}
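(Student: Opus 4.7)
The plan is to prove all three parts via barycentric calculation with respect to $ABC$, exploiting the sharp constraints Proposition~\ref{prop:HA} places on $P$. Writing $P=(p_1,p_2,p_3)$ (normalized so $p_1+p_2+p_3=1$), the condition $P\in l_G$ becomes $p_2+p_3=2p_1$, forcing $p_1=1/3$; the equation $2\beta\gamma+\alpha\gamma+\alpha\beta=0$ of $\tilde{\C}_O=\iota(l)$ (from $l:2\alpha+\beta+\gamma=0$) then gives $p_2 p_3=-p_1^2=-1/9$. These two identities, together with the formula $Q=(p_1(p_2+p_3),p_2(p_1+p_3),p_3(p_1+p_2))$ for the isotomcomplement, are the main facts I will use.

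For part (c), I first derive the affine identity $P=(A+2D)/3$ by combining the relation $P'=4O-3P$ (from Proposition~\ref{prop:HA}), the hypothesis $D_3=(A+P')/2$, and the standard isotomic-reflection identity $D+D_3=2D_0$. In signed coordinates along $BC$ with $O=D_0$ as origin, $OD/OC=|p_3-p_2|/(p_2+p_3)$; the identity $(p_3-p_2)^2=(p_2+p_3)^2-4p_2 p_3=8p_1^2$ yields the ratio $\sqrt{2}$ immediately.

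For part (b), since $T_P$ is affine and $D_3=(A+P')/2$, we have $A_3=(D+T_P(P'))/2$, so the first claim reduces to $T_P(P')=D_0$. Expanding $T_P(P')$ as the affine combination of $D,E,F$ weighted by the coordinates of $P'=\iota(P)$, one checks that the $\alpha$-component vanishes iff $P\in\tilde{\C}_O$ (so that $T_P(P')\in BC$) and that the $\beta$- and $\gamma$-components coincide iff $P\in l_G$; under both hypotheses we obtain $T_P(P')=D_0$. For the centroid assertion, verify $3P=D_0+D+Q$ directly in normalized barycentrics using the same two identities.

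For part (a), define $P_a=\iota(K^{-1}(Q_a))$, so that $P_a$ corresponds to $Q_a$ under the isotomcomplement map exactly as $P$ corresponds to $Q$. Since $Q_a$ lies on line $AQ=K^{-1}(BC)$, we get $K^{-1}(Q_a)\in K^{-2}(BC)=l$, hence $P_a\in\iota(l)=\tilde{\C}_O$. A barycentric computation with $Q_a=(-q_1,q_2,q_3)$ reduces $P_a$ projectively to $(p_1,p_3,p_2)$ under the identities $p_2+p_3=2p_1$ and $p_2 p_3=-p_1^2$; this has first coordinate $p_1=1/3$, the same as $G$, so $P_a$ lies on line $l_G=PG$ and is the required second intersection. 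Equivalently, $P_a=2G-P$, which also follows geometrically from the fact that the chord of $\tilde{\C}_O$ parallel to the diameter $BC$ is bisected by the conjugate diameter $AD_0$ (the tangent at $B$ is parallel to $AD_0$), and $AD_0\cap l_G=G$. The principal technical obstacle is the algebraic simplification of $\iota(K^{-1}(Q_a))$ into the form $(p_1,p_3,p_2)$, which uses both hypothesis identities in an essential way.
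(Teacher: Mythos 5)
Your argument is correct, but it proceeds by explicit barycentric computation where the paper's proof is synthetic, so the two routes are genuinely different. You pin $P$ down from the two conclusions of Proposition \ref{prop:HA} that $P\in l_G$ and $P\in\tilde{\C}_O=\iota(l)$, getting $p_1=\tfrac13$, $p_2+p_3=\tfrac23$, $p_2p_3=-\tfrac19$ (i.e.\ the coordinates the paper itself only records later, in (\ref{eqn:P})), after which each part is a short identity: I verified that $T_P(P')=D_0$, that $3P=O+D+Q$, and that $\iota(K^{-1}(Q_a))$ reduces to $(p_1,p_3,p_2)$ under your two identities, and the computation $(p_3-p_2)^2=(p_2+p_3)^2-4p_2p_3=8p_1^2$ gives (c) immediately. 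The paper instead proves (a) by an affine reflection in the line $A\tilde{A}$ along the direction of $BC$ (essentially the same $B\leftrightarrow C$ symmetry as your conjugate-diameter remark), proves (b) through a chain of synthetic facts (the parallelograms $Q_aQ\tilde{A}P'$ and $AD_3OQ$, the dilatation $\textsf{H}=K\textsf{R}_O$, and collinearities from papers I--III), and proves (c) by transporting the whole figure to the equilateral triangle of Lemma \ref{lem:equilateral} by an affine map. Your version buys directness and a single uniform mechanism for all three parts; the paper's preserves its synthetic program, needs no explicit solution for $P$, and for (b) yields extra incidence information ($P$ on $Q\tilde{A}$, $A_0$ the midpoint of $AQ$) along the way. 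Two small points to tighten: your parenthetical that equality of the $\beta$- and $\gamma$-components of $T_P(P')$ is equivalent to $P\in l_G$ alone is not quite right as an ``iff'' (the splitting of the two hypotheses between the two coordinate conditions is not clean), though the conclusion $T_P(P')=D_0$ under both hypotheses is correct; and the relation $P'=4O-3P$ uses not just $d(O,P')=3\,d(O,P)$ but also the ordering $P,O,P'$ on the line, which is established in the proof of Proposition \ref{prop:HA} and should be cited.
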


\begin{proof}
The anticevian triangle of $Q$ with respect to $ABC$ is $T_{P'}^{-1}(ABC)=Q_aQ_bQ_c$.  (See I, Cor. 3.11 and III, Section 2.)  Since $D_3$ is the midpoint of $AP'$, this gives that $T_{P'}^{-1}(D_3)=A$ is the midpoint of $T_{P'}^{-1}(AP')=Q_aQ$.  Therefore, $Q_a$ lies on the line $AQ=K^{-1}(BC)$, so $P_a'=K^{-1}(Q_a)$ lies on the line $l$ and is the reflection of $P'$ in the point $\tilde{A}$.  Thus, the picture for the point $P_a$ is obtained from the picture for $P$ by performing an affine reflection about the line $AG=A\tilde{A}$ in the direction of the line $BC$.  This shows that $P_a$ also lies on the line $PG \pa BC$.  The conic $\tilde{\C}_O$ only depends on $O$, so this reflection takes $\tilde{\C}_O$ to itself.  This proves a). \smallskip

To prove b) we first show that $P$ lies on the line $Q\tilde{A}$.  Note that the segment $K(P'\tilde{A})=AQ$ is half the length of $P'\tilde{A}$, so $P'\tilde{A} \cong Q_aQ$.  Hence, $Q_aQ\tilde{A}P'$ is a parallelogram, so $Q\tilde{A} \cong Q_aP'$.  Suppose that $Q\tilde{A}$ intersects line $PP'$ in a point $X$.  From the fact that $K(Q)$ is the midpoint of $D_3D_0$ we know that $Q$ is the midpoint of $K^{-1}(D_3)A$.  Also, $D_3Q'$ lies on the point $\lambda(A)=\lambda(H)=Q$, by II, Theorem 3.4(b) and III, Theorem 2.7.  It follows that $K^{-1}(D_3), P=K^{-1}(Q'), P'=K^{-1}(Q)$ are collinear and $K^{-1}(D_3)QX \sim P'\tilde{A}X$, with similarity ratio $1/2$, since $K^{-1}(D_3)Q$ has half the length of $P'\tilde{A}$.  Hence $d(X, K^{-1}(D_3)) = \frac{1}{2} d(X, P')$.  On the other hand, $d(O,P) = \frac{1}{3} d(O,P')$, whence it follows, since $O$ is halfway between $P'$ and $K^{-1}(D_3)$ on line $BC$, that $d(P, K^{-1}(D_3)) =\frac{1}{2} d(P, P')$.  Therefore, $X=P$ and $P$ lies on $Q\tilde{A}$.  \smallskip

Now, $\textsf{P}=AD_3OQ$ is a parallelogram, since $K(AP')=OQ$, so opposite sides in $AD_3OQ$ are parallel.  Hence, $T_P(\textsf{P})=DA_3A_0Q$ is a parallelogram, whose side $A_3A_0=T_P(D_3D_0)$ lies on the line $EF=T_P(BC)$.  Applying the dilatation $\textsf{H}=K\textsf{R}_O$ (with center $H=A$) to the collinear points $Q, P, \tilde{A}$ shows that $\textsf{H}(Q), Z$, and $O$ are collinear.  On the other hand, $O=D_0, Z$, and $A_0$ are collinear by \cite{mmq}, Corollary 5 (since $Z=R$ is the midpoint of $AP$), and $A_0$ lies on $AQ$ by I, Theorem 2.4.  This implies that $A_0=\textsf{H}(Q)=AQ \cdot OZ$ is the midpoint of segment $AQ$, and therefore $A_3$ is the midpoint of segment $OD$.  Since $P$ lies on the line $PG$, $2/3$ of the way from the vertex $Q$ of $ODQ$ to the opposite side $OD$, and lies on the median $QA_3$, it must be the centroid of $ODQ$.  This proves b). \smallskip

To prove c), we apply an affine map taking $ABC$ to an equilateral triangle.  It is clear that such a map preserves all the relationships in Figure \ref{fig:2.2}.  Thus we may assume $ABC$ is an equilateral triangle whose sidelengths are $2$. By Lemma \ref{lem:equilateral} there is a point $P$ for which $AP \cdot BC=D$ with $D_0D=\sqrt{2}, O=D_0$, and $D_3$ the midpoint of $AP'$.  Now Proposition \ref{prop:HA} implies the result, since the equilateral diagram has to map back to one of the two possible diagrams (Figure \ref{fig:2.2}) for the original triangle.
\end{proof}

By Proposition \ref{prop:HA} and III, Theorem 2.5 we know that the conic $\overline{\C}_A$ lies on the points $P_1, P_2, P_3, P_4$, where $P_1$ and $P_2=(P_1)_a$ are the points in the intersection $\tilde{\C}_O \cap l_G$ described in Corollary \ref{cor:HArel}, and $P_3=(P_1)_b, P_4=(P_1)_c$ are the anti-isotomcomplements of the points $(Q_1)_b, (Q_1)_c$, since these points all have the same generalized orthocenter $H=A$.  (See Figure \ref{fig:locus}.)  It can be shown that the equation of the conic $\overline{\C}_A$ in terms of the barycentric coordinates of the point $P=(x,y,z)$ is $xy+xz+yz=x^2$ (see \cite{mmq}).  Furthermore, the center of $\bar \C_A$ lies on the median $AG$, $6/7$-ths of the way from $A$ to $D_0$.  \medskip

\noindent {\bf Remarks.}
\noindent 1. The polar of $A$ with respect to the conic $\bar \C_A$ is the line $l_G$ through $G$ parallel to $BC$.  This holds because the quadrangle $BCE_0F_0$ is inscribed in $\bar \C_A$, so its diagonal triangle, whose vertices are $A, G$, and $BC \cdot \l_\infty$, is self-polar. Thus, the polar of $A$ is the line $l_G$. \smallskip

\noindent 2. The two points $P$ in the intersection $\bar \C_A \cap l_G$ have tangents which go through $A$.  This follows from the first remark, since these points lie on the polar $a=l_G$ of $A$ with respect to $\bar \C_A$.  As a result, the points $D$ on $BC$, for which there is a point $P$ on $AD$ satisfying $H = A$, have the property that the ratio of unsigned lengths $DD_0/D_0C \le \sqrt 2$.  This follows from the fact that $\bar \C_A$ is an ellipse: since it is an ellipse for the equilateral triangle, it must be an ellipse for any triangle.  Then the maximal ratio $DD_0/D_0C$ occurs at the tangents to $\bar \C_A$ from $A$; and we showed above that for these two points $P$, $D = AP \cdot BC$ satisfies $DD_0/D_0C = \sqrt 2$. \bigskip

\end{section}

\begin{section}{The locus of points $P$ for which $\textsf{M}$ is a translation.}

We can characterize the points $P$, for which $\textsf{M}$ is a translation, as follows.  We will have occasion to use the fact that $\textsf{M}=T_P \circ K^{-1} \circ T_{P'}$ is symmetric in the points $P$ and $P'$, since $T_P \circ K^{-1} \circ T_{P'}=T_{P'} \circ K^{-1} \circ T_P$.  This follows easily from the fact that the maps $T_P \circ K^{-1}$ and $T_{P'} \circ K^{-1}$ commute with each other.  See III, Proposition 3.12 and IV, Lemma 5.2.

\begin{thm}
\label{thm:trans}
Let $P$ and $P'$ be ordinary points not on the sides or medians of $ABC$ or $K^{-1}(ABC)$.  Then the map $\textsf{M}=T_P \circ K^{-1} \circ T_{P'}$ is a translation if and only if any one of the following statements holds.
\begin{enumerate}[1.]
\item $OQQ'O'$ is a parallelogram;
\item $P$ is on the circumconic $\tilde{\cC}_O$;
\item $O$ and $O'$ lie on $PP'$;
\item $Z$ lies on $QQ'$;
\item The signed ratio $\frac{GZ}{ZV} = \frac{1}{3}$;
\item $U = K^{-1}(Z)=K(V)$.
\end{enumerate}
\end{thm}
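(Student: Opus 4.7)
The plan is to reduce each of the six conditions individually to the statement ``$\textsf{M}$ is a translation'', using three structural ingredients: the affine reflection $\eta$ (which fixes $GV$ pointwise and swaps the pairs $(P,P')$, $(Q,Q')$, $(O,O')$), the formula $S = OQ \cdot GV = OQ \cdot O'Q'$ for the center of $\textsf{M}$ recalled in the introduction, and the identification $\tilde{Z} = \textsf{R}_O K^{-1}(Z)$ from III, Theorem 3.14 of the fourth intersection of $\cC_P$ with $\tilde{\cC}_O$. A key preliminary consequence of $\eta$ is that $OO' \parallel QQ' \parallel PP'$, with the three midpoints all lying on $GV$; also, $Z$ is fixed by $\eta$ since $Z \in GV$ (II, Proposition 2.3), and $\eta$ commutes with $K$ since both fix $G$.

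The easy reductions come first. For (1) $\iff$ (translation): since $OO' \parallel QQ'$ always, the trapezoid $OQQ'O'$ is a parallelogram iff $OQ \parallel O'Q'$, and by $S = OQ \cdot O'Q'$ this is iff $S \in l_\infty$. For (4) $\iff$ (5) $\iff$ (6): a brief affine-combination computation from $V = PQ \cdot P'Q'$ yields $V = 3G - P - P'$, whence $K(V) = (P+P')/2$ is the midpoint of $PP'$ and $K^2(V)$ is the midpoint of $QQ'$. Since $Z \in GV$ meets the parallel line $QQ'$ only at its midpoint, (4) reads $Z = K^2(V)$; applying $K^{-1}$ gives (6), and a direct signed-ratio computation along $GV$ gives (5).

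For (2) $\Rightarrow$ (1), I would use the formula $\tilde{Z} = \textsf{R}_O K^{-1}(Z)$: since $\cC_P \cap \tilde{\cC}_O = \{A, B, C, \tilde{Z}\}$ and, by hypothesis, $P \notin \{A, B, C\}$, condition (2) reads $P = \tilde{Z}$, equivalently $Z = (H+P)/2$ with $H = K^{-1}(O)$. Applying $\eta$ (which fixes $Z$, commutes with $K$, and swaps $(P, P'), (O, O'), (H, H')$) yields the symmetric identity $Z = (H'+P')/2$. Subtracting gives $H - H' = P' - P$, which simplifies via the formula $K^{-1}(X) = 3G - 2X$ to $\vec{OO'} = (P'-P)/2$; since $\vec{QQ'} = K(P) - K(P') = (P'-P)/2$ as well, we obtain $\vec{OO'} = \vec{QQ'}$, which is the parallelogram condition (1).

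The main obstacle I expect is the reverse direction (1) $\Rightarrow$ (2), and tying (3) cleanly into the chain. For (1) $\Rightarrow$ (2): starting from $\textsf{M}$ being a translation, one must show that $P$ is the fourth intersection of $\cC_P$ with $\tilde{\cC}_O$; this presumably uses the alternate factorization $\textsf{M} = T_{P'} K^{-1} T_P$ from the paragraph before the theorem, together with the fixed-point property $T_P(Q) = Q$ from I, Theorem 2.4, and the barycentric computation foreshadowed in the introduction. For (3): since $OO' \parallel PP'$ always, (3) is equivalent to ``midpoint of $OO'$ equals midpoint of $PP'$''; combining the averaged identity $Z = ((H+H')/2 + (P+P')/2)/2$ obtained in the (2) step with the homothety relations $(H+H')/2 = K^{-1}(\text{midpoint of }OO')$ and $(P+P')/2 = K(V)$ yields a short equation on $GV$ linking (3) to (4). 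The delicate point throughout is controlling how $O = T_{P'}^{-1}(K(Q))$ sits relative to $PP'$ as $P$ varies, and this is where the proof likely leans most heavily on the explicit construction of $O$ from the affine maps of III and IV.
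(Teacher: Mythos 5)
Your proposal gets three things right, and one of them is a genuine improvement: the reduction of (1) to the translation property via $S=OQ\cdot O'Q'$, the internal chain (4) $\iff$ (5) $\iff$ (6) via $V=3G-P-P'$ and $K^2(V)=$ midpoint of $QQ'$, and a direct vector proof of (2) $\Rightarrow$ (1) from $P=\tilde Z=\textsf{R}_OK^{-1}(Z)$, i.e.\ $Z=(H+P)/2$, applied together with its $\eta$-image. (The paper instead routes (2) $\Rightarrow$ (3) $\Rightarrow$ (1), so your argument there is shorter.) But the proposal does not close the equivalence, and the missing links are where the real content of the theorem lies. First, nothing in your write-up connects the cluster $\{4,5,6\}$ to condition (1): you prove those three are equivalent to \emph{each other}, but never to the translation property. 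The paper's bridge is IV, Proposition 3.10 --- $OQ$ and $O'Q'$ are the \emph{tangents} to $\cC_P$ at $Q$ and $Q'$ --- so $OQ\parallel O'Q'$ iff $q\cdot q'$ lies on $l_\infty$, the polar of the center $Z$, iff $Z\in QQ'$. Without some such polarity argument (which you never invoke), conditions (4)--(6) float free of the rest.

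Second, you explicitly flag (1) $\Rightarrow$ (2) as "the main obstacle" and offer only "presumably uses the alternate factorization\dots and the barycentric computation": that is a gap, not a proof. The paper derives (2) from (4): once $U=K^{-1}(Z)=K(V)$ and $O$ is shown to be the midpoint of $PK(V)$, the membership $K^{-1}(Z)\in\tilde{\cC}_O$ (because $Z\in\N_H=K(\tilde{\cC}_O)$, the Feuerbach-type statement of III) gives $P=\textsf{R}_O(K^{-1}(Z))\in\tilde{\cC}_O$. Third, your plan for (3) is circular: the identity $Z=(H+P)/2$ is, as your own computation in the (2) step shows, \emph{equivalent} to $P=\tilde Z$, i.e.\ to condition (2); so the "averaged identity" you propose to combine with the midpoint reformulation of (3) presupposes what is to be proved and cannot characterize (3) in general. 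The paper instead proves (3) $\Rightarrow$ (1) by a separate argument: $O\in PP'$ forces $O=T_P(P')$, then $\textsf{M}(\tilde H)=O$ with $\tilde H=T_P^{-1}(H)$ and $O$ the midpoint of $\tilde HQ$, so $\textsf{M}$ carries the segment $\tilde HO$ to the congruent, like-oriented segment $OQ$ on the same line, which rules out every homothety ratio $k\neq 1$ and leaves only a translation. Some substitute for each of these three steps is needed before your outline becomes a proof.
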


\begin{proof}
It is easy to see that $\textsf{M}$ is a translation if and only if $OQQ'O'$ is a parallelogram, since $\textsf{M}(O)=Q$ and $\textsf{M}(O')=Q'$ and the center of $\textsf{M}$ is the point $S=OQ \cdot GV = OQ \cdot O'Q'$.  (See III, Theorem 3.4 and the proof of III, Theorem 3.9.)  Thus, we will prove that the statements (2)-(6) are equivalent to (1).  \smallskip

First we prove that (1) $\Rightarrow$ (4).  If $OQQ'O'$ is a parallelogram, then $OQ \pa O'Q'$.  By IV, Propsition 3.10, $q=OQ$ is the tangent to the conic $\cC_P=ABCPQ$ at $Q$ and $q'=O'Q'$ is the tangent to $\cC_P$ at $Q'$.  It follows that $q \cdot q'$ is on $l_\infty$, which is the polar of the center $Z$ of $\cC_P$.  Therefore, $QQ'$ lies on $Z$. \smallskip

Conversely, assume (4).  Then $Z \in QQ'$ implies that $q \cdot q'$ lies on $l_\infty$, so $OQ \pa O'Q'$, giving that $S \in l_\infty$ and $\textsf{M}$ is a translation.  Hence, (4) $\Rightarrow$ (1).  Furthermore, (1) $\Rightarrow$ (3), as follows.  $\textsf{M}$ is a translation so $Q\textsf{M}(Q) \cong O\textsf{M}(O) = OQ$, i.e. $Q$ is the midpoint of $O\textsf{M}(Q)$, where $\textsf{M}(Q)=T_{P'} \circ K^{-1} \circ T_P(Q)=T_{P'}(P')$.  But $Q$ is also the midpoint of $PV$, so triangles $PQO$ and $VQ\textsf{M}(Q)$ are congruent, giving $\textsf{M}(Q)V \pa OP$.  We know $\textsf{M}(Q)V = K^{-1}(PP')$ by II, Proposition 2.3(f) and IV, Theorem 3.11(7.).  Since the line through $P$ parallel to $K^{-1}(PP')$ is $PP'$,  $O$ lies on $PP'$.  Hence, $\eta(O)=O'$ also lies on $PP'$, giving (3).  \smallskip

Now (4) holds if and only if $Z$ is the midpoint of $QQ'$ ($Z$ lies on $GV$, the fixed line of $\eta$, and $\eta(Q)=Q'$).  The point $V$ is the midpoint of segment $K^{-1}(PP')$ (II, Proposition 2.3), so $K(V)$ is the midpoint of segment $PP'$ and $K^2(V)$ is the midpoint of $K(PP')=QQ'$.  Hence, (4) holds if and only if $K^2(V)=Z$, which holds if and only if $K^{-1}(Z) = K(V)$.  Thus, (4) $\iff$ (6).  This allows us to show (4) $\Rightarrow$ (2), as follows.  Since (4) also implies (1) and (3), we have that $OO' = QQ' = \frac{1}{2}PP'$.  Also, $K(V)$ on $GV$ is the midpoint of $PP'$ and $OO'$.  Since $OQ \pa GV$ and $QP'$ intersects $GV$ at $G$, it is clear that $O$ and $P$ lie on the same side of line $GV$.  Hence, $O$ must be the midpoint of $PK(V)$ (the dilation with center $U=K(V)$ takes $OO'$ to $PP'$). Now $K(V) = K^{-1}(Z) \in \tilde \cC_O$, since $Z \in \N_H = K(\tilde \cC_O$); so $P = \textsf{R}_O(K^{-1}(Z))$ lies on $\tilde \cC_O$, hence (2).  Thus (4) $\Rightarrow$ (2).  \smallskip

We next show that (2) $\Rightarrow$ (3).  Assume that $P$ lies on $\tilde{\C}_O$.  Then $P$ is the fourth point of intersection of the conics $\tilde{\C}_O$ and $\Cp=ABCPQ$.  From III, Theorem 3.14 we deduce that $P= \tilde{Z}=\textsf{R}_O \circ K^{-1}(Z)$.  Furthermore, $\tilde{Z}$ lies on $OP'$.  Hence, $P, O, P'$ are collinear, and applying the affine reflection $\eta$ gives that $O'$ lies on the line $PP'$, as well.  \smallskip

Now suppose that (3) holds, so that $O$ lies on $PP'$.  From II, Corollary 2.2 we know that $T_P(P')$ lies on $PP'$, so that IV, Theorem 3.11 implies that $O=OQ \cdot PP'=T_P(P')$.  Let $\tilde H = T_P^{-1}(H)=T_{P'}^{-1}(Q)$, as in III, Theorem 2.10.  Note that
$$\textsf{M}(\tilde H) = T_P \circ K^{-1} \circ T_{P'}(\tilde H) = T_P \circ K^{-1}(Q)=T_P(P')=O.$$
Hence, $\textsf{M}(\tilde HO)=OQ$.  Part III, Lemma 3.8 says that $O$ is the midpoint of $\tilde HQ$, so $\textsf{M}(\tilde HO) \cong \tilde HO$.  (Note that $\tilde H = T_{P'}^{-1}(Q) \neq T_{P'}^{-1}\circ K(Q)=O$.)  The result of III, Theorem 3.4 says that $\textsf{M}$ is a homothety or translation.  A homothety expands or contracts all segments on lines by the same factor $k$, so if $\textsf{M}$ were not a translation, the factor $k=\pm 1$.   But $k \neq 1$ since $\textsf{M}$ is not the identity map and $k \neq -1$ since it preserves the orientation of the segment $\tilde HO$ on the line $OQ$.  Hence, $\textsf{M}$ must be a translation. This proves (3) $\Rightarrow$ (1), and therefore $(1) \Rightarrow (4) \Rightarrow (2) \Rightarrow (3) \Rightarrow (1)$.  \smallskip

Furthermore, (5) $\iff$ (6).  If $U=K^{-1}(Z)=K(V)$, then taking signed distances yields $GZ=GK(U)=\frac{1}{2}UG=\frac{1}{4}GV$, which gives that $\frac{GZ}{ZV}=\frac{1}{3}$.  Conversely, if $\frac{GZ}{ZV}=\frac{1}{3}$, then $\frac{GZ}{GV}=\frac{1}{4}$, which implies $Z=K^2(V)$, since $K^2(V)=K(K(V))$ is the unique point $X$ on $GV$ for which the signed ratio $\frac{GX}{GV}=\frac{1}{4}$.  Thus, (5) $\iff$ (6) $\iff$ (4) (from above).  \smallskip

This completes the proof that (1)-(6) are equivalent.
\end{proof}

\begin{figure}
\[\includegraphics[width=5.5in]{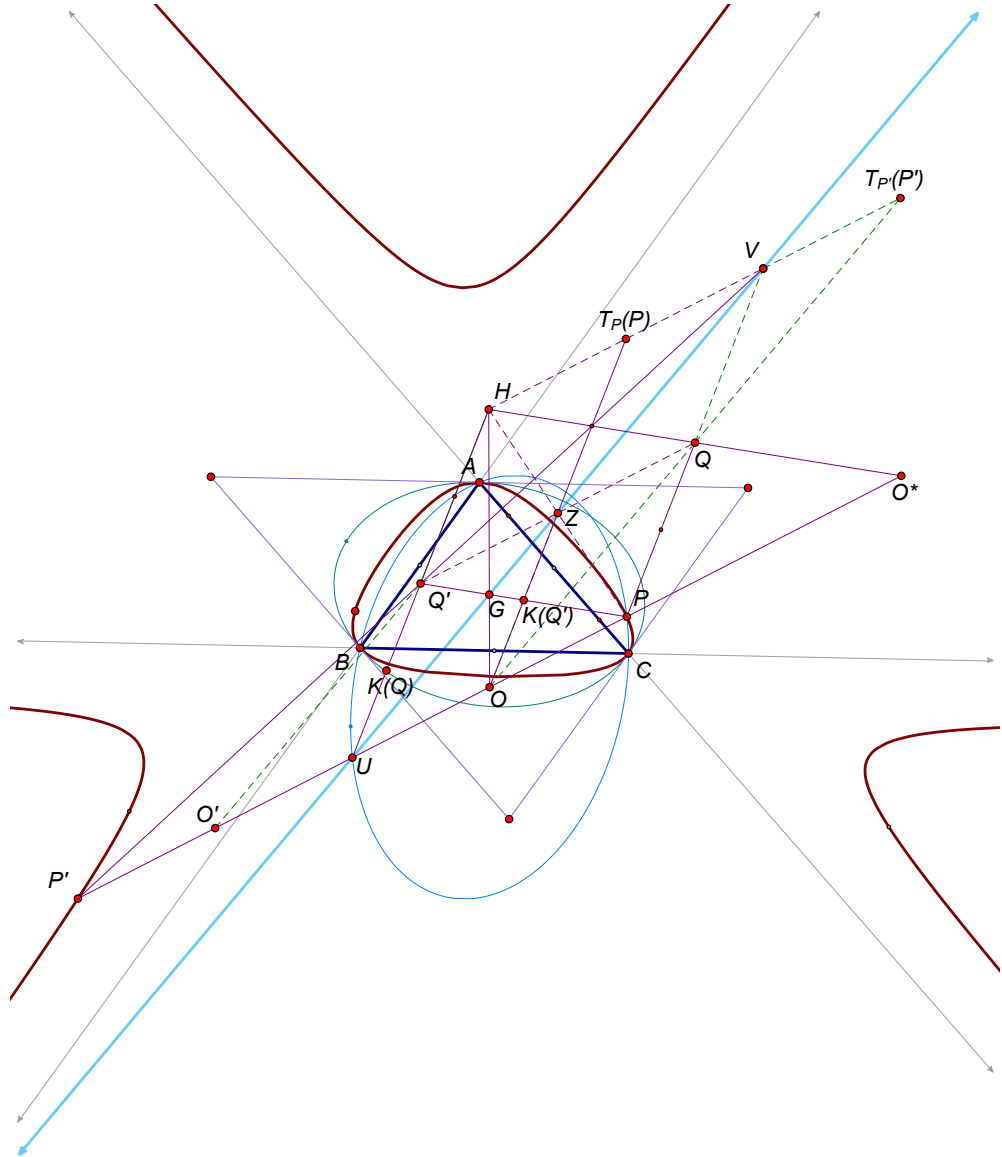}\]
\caption{Locus of $\textsf{M}$ a translation (curve $\mathcal{E}_S$ in brown) with Steiner circumellipse (teal) and $\tilde \cC_O$ (blue).}
\label{fig:3.3}
\end{figure}

\begin{cor} Under the hypotheses of Theorem \ref{thm:trans}, if $\textsf{M}$ is a translation:
\begin{enumerate}[1.]
\item $HK^{-1}(Z)PV$ is a parallelogram;
\item $T_P(P)$ is the midpoint of segment $HV$;
\item $T_P(P') = O$;
\item The points $P',O',U=K^{-1}(Z), O, P$ are equally spaced on line $PP'$;
\item $OH$ is tangent to the conic $\cC_P=ABCPQ$ at $H$.
\end{enumerate}
\label{cor:M}
\end{cor}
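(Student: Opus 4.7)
The plan is to verify (1)--(5) one at a time, freely using the equivalences (1)--(6) of Theorem~\ref{thm:trans}. I would begin with the bookkeeping statements (3) and (4). Statement (3) is essentially already proven inside the theorem: $T_P(P')$ lies on $PP' \cap OQ$ by II, Corollary~2.2 together with IV, Theorem~3.11, and this intersection equals $O$ under condition~(3). For (4), the midpoint $U=K(V)$ of $PP'$ is given by II, Proposition~2.3, and ``$O=$ midpoint of $PU$'' was established inside the $(4)\Rightarrow(2)$ step of Theorem~\ref{thm:trans}. The symmetry $\textsf{M}=T_P\circ K^{-1}\circ T_{P'}=T_{P'}\circ K^{-1}\circ T_P$ of the translation then gives the analogous identity ``$O'=$ midpoint of $P'U$'', yielding the equal spacing of the five points on $PP'$.

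For (1), I verify that the diagonals $HP$ and $UV$ of the quadrilateral $HUPV$ share the common midpoint $Z$. Because $U=K(V)$ and $K$ is a homothety about $G$ with ratio $-\tfrac12$, the midpoint of $UV$ equals $K(U)=Z$. For $HP$, the identity $\tilde Z=\textsf{R}_O\circ K^{-1}(Z)=2Z-H$ (which follows by unwinding $H=K^{-1}(O)$) combined with $P=\tilde Z$ (condition~(2)) gives that $Z$ is the midpoint of $HP$. For (2), in the parallelogram $HUPV$ with centre $Z$ the midpoints of opposite sides are interchanged by $\textsf{R}_Z$; since the midpoint of $UP$ is $O$ by (4), the midpoint of $HV$ equals $2Z-O$. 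To identify this with $T_P(P)$, I apply $T_P$ to the general affine identity $P=2Q+P'-2Q'$ (which is $P-P'=2(Q-Q')$, coming from $K(P)=Q'$, $K(P')=Q$ and the ratio $-\tfrac12$), using the three known images $T_P(Q)=Q$ (I, Theorem~2.4), $T_P(P')=O$ (by~(3)), and $T_P(Q')=P$ (I, Theorem~3.7). This yields $T_P(P)=2Q+O-2P$, and the equality with $2Z-O$ reduces to $Z-Q=O-P$, where both sides compute to $(P'-P)/4$ from the midpoint information already in hand.

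Statement (5) is the main obstacle. Since $\textsf{R}_Z$ stabilises $\cC_P$ and sends $P$ to $H$ by (1), the point $H$ lies on $\cC_P$ and the tangent at $H$ is the $\textsf{R}_Z$-image of the tangent at $P$. Rewriting (2) as $T_P(P)=2Z-O=\textsf{R}_Z(O)$, it suffices to show that the tangent to $\cC_P$ at $P$ passes through $T_P(P)$: reflecting through $Z$ then sends this tangent to the line $HO$, which is consequently the tangent at $H$. The fact that $T_P(P)$ lies on the tangent to $\cC_P=ABCPQ$ at $P$ is a general projective identity independent of the translation hypothesis, and I would prove it by applying Pascal's theorem to the degenerate hexagon $PPABCQ$ on $\cC_P$ (the ``side'' $PP$ being the tangent at $P$) and comparing the resulting Pascal intersection on $BC$ with the cevian description $T_P(P)=xD+yE+zF$, $(x,y,z)$ being the normalised barycentrics of $P$. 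This Pascal-based incidence is the only ingredient not reducible to the parallelogram arithmetic used for (1)--(4).
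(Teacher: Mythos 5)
Your proposal is correct, and it reaches parts (1), (2) and (5) by genuinely different routes than the paper. For (1) the paper first shows $OZQ'U$ is a parallelogram (from $UO \cong ZQ'$ and $UO \pa ZQ'$) and then applies $K^{-1}$; you instead check that the diagonals $HP$ and $UV$ of $HUPV$ bisect each other at $Z$, using only $Z=K(U)$ and $P=\textsf{R}_O\circ K^{-1}(Z)=2Z-H$, which is cleaner. For (2) the paper transports the parallelogram $OZQ'U$ by the translation $\textsf{M}$ and reads off $\textsf{M}(Q')=T_P(P)$ as the midpoint of $HV$; your computation $T_P(P)=2T_P(Q)+T_P(P')-2T_P(Q')=2Q+O-2P$, compared with $2Z-O$, is an equally valid (more computational) substitute, and all its inputs ($T_P(Q)=Q$, $T_P(Q')=P$, $T_P(P')=O$, $Z$ the midpoint of $QQ'$, $O$ the midpoint of $PU$) are indeed in hand. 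The real divergence is (5): the paper locates $O^*=PP'\cdot QH$, shows by similar triangles that it is the harmonic conjugate of $O$ with respect to $P, P'$ (using the $-1/3$ ratio coming from the equal spacing in (4)), and concludes that $QH$ is the polar of $O$, whence $OH$ is tangent at $H$; you instead reflect in $Z$ and reduce everything to the claim that $T_P(P)$ lies on the tangent to $\cC_P$ at $P$. That claim is true and your reduction is sound, but you leave its proof as a sketched Pascal computation. It actually follows immediately from the characterization of $\cC_P$ that the paper itself invokes in Lemma 4.1 (II, Corollary 2.2): for a line $\ell$ through $P$, the residual intersection of $\ell$ with $\cC_P$ is $\ell \cdot T_P^{-1}(\ell)$, so $\ell$ is the tangent at $P$ exactly when $P \in T_P^{-1}(\ell)$, i.e.\ when $T_P(P)\in\ell$ --- no Pascal hexagon needed. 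Your route makes (5) a corollary of (1) and (2) plus one translation-independent incidence; the paper's polarity argument is self-contained but tied to the specific harmonic ratio produced by (4).
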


\begin{proof}
(See Figure \ref{fig:3.3}.)  Statements (3) and (4) were proved in the course of proving (3) $\Rightarrow$ (1) and (4) $\Rightarrow$ (2) above.  With $U=K^{-1}(Z)=K(V)$, (4) gives that that $UO =\frac{1}{2}UP$, so $UO \cong K(UP) = ZQ'$.  This shows that $UO \pa ZQ'$ and $OZQ'U$ is a parallelogram.  Thus, $K^{-1}(OZQ'U)=HUPV$ is also a parallelogram.  In particular, $QQ' \pa PP' \pa HV$.  Since $\textsf{M}(U) = \textsf{M} \circ K^{-1}(Z) = Z$ (by the Generalized Feuerbach Theorem in III) and $Z$ is the midpoint of segment $UV$, the translation $\textsf{M}$ maps parallelogram $OZQ'U$ to $QV \textsf{M}(Q')Z$, where $\textsf{M}(Q') = T_P \circ K^{-1} \circ T_{P'}(Q') = T_P(P)$.  As $Z$ is the center of parallelogram $HUPV$ and $O$ is the midpoint of the side $UP$, while $Q$ is the midpoint of side $PV$, it follows that $Q'$ is the midpoint of side $HU$ and $\textsf{M}(Q') = T_P(P)= OZ \cdot HV$ is the midpoint of $HV$, proving statement (2).   Finally, let $O^* = PP' \cdot QH$. Triangles $PO^*Q$ and $UO^*H$ are similar ($UH \pa PV=PQ$) and $PQ = \frac{1}{2}UH$, so $PQ$ is the midline of triangle $UO^*H$ and $PO^* = UP = \frac{1}{2}PP'$. This implies $\frac{PO^*}{O^*P'} = -\frac{1}{3} = -\frac{PO}{OP'}$, and therefore $O^*$ is the harmonic conjugate of $O$ with respect to $P$ and $P'$. Thus, $O^*$ is conjugate to $O$ with respect to the polarity induced by $\cC_P$.  As in the proof of the theorem, $q=OQ$ is tangent to $C_P$ at $Q$, so $Q$ is also conjugate to $O$.  Thus, the polar of $O$ is $o=QO^* = QH$ and since $H$ and $Q$ are on $\cC_P$ (III, Theorem 2.8), this implies that $OH$ is the tangent to $\cC_P$ at $H$. This proves (5).
\end{proof}

\noindent {\bf Remark.} The statements in Corollary \ref{cor:M} are actually all equivalent to the map $\textsf{M}$ being a translation.  For the sake of brevity, we leave this verification to the interested reader. \medskip

The set of points $P=(x,y,z)$, for which $\textsf{M}$ is a translation, can be determined using barycentric coordinates.  This is just the set of points for which $S \in l_\infty$.  It can be shown (see \cite{mm0}, eq. (8.1)) that homogeneous barycentric coordinates of the point $S$ are
$$S=(x(y+z)^2,y(x+z)^2,z(x+y)^2),$$
where $P=(x,y,z)$.  Thus, the the locus in question has the projective equation
$$\mathcal{E_S}: \ \ x(y+z)^2+y(x+z)^2+z(x+y)^2=0.$$
Setting $z=1-x-y$, so that $(x,y,z)$ are absolute barycentric coordinates, the set of ordinary points for which $\textsf{M}$ is a translation has the affine equation
\begin{equation}
(3x+1)y^2+(3x+1)(x-1)y+x^2-x=0.
\label{eqn:nform}
\end{equation}
Since the discriminant of this equation, with respect to $y$, is $D=(x-1)(3x+1)(3x^2-6x-1)$, this curve is birationally equivalent to
$$Y^2=(X-1)(3X+1)(3X^2-6X-1).$$
Using $X=\frac{u-1}{u+3}, Y=\frac{8v}{(u+3)^2}$, this equation can be written in the form
$$\mathcal{E}_S': \ v^2=u(u^2+6u-3),$$
which is an elliptic curve with $j$-invariant $j=54000=2^{4}3^{3}5^{3}$ and infinitely many points defined over real quadratic fields.  Thus, we see that there are infinitely many points for which $\textsf{M}$ is a translation.  Note that $\mathcal{E}_S'$ is isomorphic to the curve (36A2) in Cremona's tables \cite{cre} (via the substitution $u=x-2,v=y$).  Consequently, $\mathcal{E}_S'$ has the torsion points $T=\{\tilde O, (0,0), (1, \pm 2), (-3, \pm 6)\}$ ($\tilde O$ is the base point) and rank $r=0$ over $\mathbb{Q}$.  These $6$ points correspond to the vertices of triangle $ABC$ and the infinite points on its sides; the latter points are $(0,1,-1), (1,0,-1), (1,-1,0)$.  \medskip

It is not hard to calculate, using the equation (\ref{eqn:nform}) and the equation $xy+xz+yz=x^2$ for $\overline{\cC}_A$ that the intersection $\mathcal{E}_S \cap \overline{\cC}_A$ consists of the points $B=(0,1,0)$ and $C=(0,0,1)$, with intersection multiplicity $2$ at both points, together with the points
\begin{equation}
P=\left(\frac{1}{3},\frac{1+\sqrt{2}}{3},\frac{1-\sqrt{2}}{3}\right) \ \ \textrm{and} \ \ P_a=\left(\frac{1}{3},\frac{1-\sqrt{2}}{3},\frac{1+\sqrt{2}}{3}\right),
\label{eqn:P}
\end{equation}
where $P$ and $P_a$ are the points pictured in Figure \ref{fig:2.2}.  (These points are labeled $P_1$ and $P_2$ in Figure \ref{fig:locus}.)  That these are the correct points follows from the fact that
$$P-G=P-\left(\frac{1}{3},\frac{1}{3},\frac{1}{3}\right) = \frac{\sqrt{2}}{3}(0,1,-1),$$
and therefore $PG \pa BC$.  The affine coordinates of $P$ on (\ref{eqn:nform}) are $(x,y)=\left(\frac{1}{3},\frac{1+\sqrt{2}}{3}\right)$, which corresponds to the point $\tilde P=(u,v)=(3,6\sqrt{2})$ on $\mathcal{E}_S'$.  The double of the latter point is $[2]\tilde P=\left(\frac{1}{2},\frac{\sqrt{2}}{4}\right)$, and $[4]\tilde P=\left(\frac{169}{8},-\frac{2483 \sqrt{2}}{32}\right)$.  Using \cite{si}, Theorem VII.3.4 (p. 193) with $p=2$ over the local field $K=\mathbb{Q}_2(\sqrt{2})$, the coordinates of the last point show that $\tilde P$ is a point of infinite order on $\mathcal{E}_S'$, and therefore $P$ is a point of infinite order on (\ref{eqn:nform}).  Hence, there are infinitely many points on $\mathcal{E}_S$ which have coordinates in the field $\mathbb{Q}(\sqrt{2})$.  \medskip

It follows from this calculation that the only points, other than the vertices of $ABC$, in the intersection $\mathcal{E}_S \cap \mathscr{L}$ of $\mathcal{E}_S$ and the locus $\mathscr{L}=\mathscr{L}_A \cup \mathscr{L}_B \cup \mathscr{L}_C$ of Section 2 are the $6$ points obtained by permuting the coordinates of the point $P$ in (\ref{eqn:P}).  There are, however, $6$ more important points on the curve $\mathcal{E}_S$.  These are the intersections of $\mathcal{E}_S$ with the medians of triangle $ABC$, which are found by setting two variables equal to each other in the equation for $\mathcal{E}_S$.  This yields the following six points on $\mathcal{E}_S$, paired with their isotomic conjugates:
\begin{align*}
P_1 &= (1, -2+\sqrt{3}, -2+\sqrt{3}), \ \ P_1'=(1, -2-\sqrt{3},-2-\sqrt{3}) \\
P_2 &= (-2+\sqrt{3},1,-2+\sqrt{3}), \ \ P_2'=(-2-\sqrt{3},1,-2-\sqrt{3}) \\
P_3 &= (-2+\sqrt{3},-2+\sqrt{3},1), \ \ P_3'=(-2-\sqrt{3},-2-\sqrt{3},1).
\end{align*}
These correspond to the six points
$$(x,y)=\left(1\pm \frac{2}{3}\sqrt{3},\mp \frac{\sqrt{3}}{3} \right), \ \left(\pm \frac{\sqrt{3}}{3}, 1\mp \frac{2}{3}\sqrt{3}\right) , \ \left(\pm \frac{\sqrt{3}}{3}, \pm \frac{\sqrt{3}}{3}\right)$$
on (\ref{eqn:nform}); and to the six points
$$(u,v)=(-3 \pm 2\sqrt{3},0), \ (3 \pm 2\sqrt{3},12 \pm 6\sqrt{3}), \ (3 \pm 2\sqrt{3},-12 \mp 6\sqrt{3})$$
on $\mathcal{E}_S'$.  Together with the points in $T$, these points form a torsion group $T_{12}$ of order $12$ defined over $\mathbb{Q}(\sqrt{3})$, with $T_{12} \cong \mathbb{Z}_2 \oplus \mathbb{Z}_2 \oplus \mathbb{Z}_3$.  The points in $T_{12}$ are the points which are excluded in Theorem \ref{thm:trans} and Corollary \ref{cor:M}. In particular, there are only two excluded points on each median, for which $\textsf{M}$ is a translation. \smallskip

The equation (\ref{eqn:nform}) is a special case of the equation
$$E_a: \ \ (ax+1)y^2+(ax+1)(x-1)y+x^2-x=0,$$
which we call the {\it geometric normal form} of an elliptic curve.  It can be shown that for real values of $a \notin \{3, 0, -1, 9\}$, the set of points (not a vertex or an infinite point on the sides of $ABC$) on this elliptic curve is the locus of points $P$ for which the map $\textsf{M}$ is a homothety with ratio $k=\frac{4}{a+1}$; and every elliptic curve defined over $\mathbb{R}$ is isomorphic to a curve in this form.

\end{section}

\begin{section}{Constructing the elliptic curve.}

In this section we will use the results of the previous section to give a geometric construction of the elliptic curve $\mathcal{E}_S$.  We start with the following lemma.

\begin{lem}
\label{lem:GZ}
Assume that $P$ is a point for which the map $\textsf{M}$ is a translation.  Then the line $GZ = GV$ does not intersect the conic $\cC_P$, which is a hyperbola.
\end{lem}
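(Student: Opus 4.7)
The plan is to restrict the equation of $\cC_P$ to the line $GV$ (which passes through the center $Z$ of $\cC_P$ by II, Prop.~2.3) and apply the midpoint--chord identity to two chords of $\cC_P$ parallel to $\overrightarrow{PP'}$, in order to show that the resulting quadratic is sign-definite, and simultaneously that the quadratic form of $\cC_P$ is indefinite.

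Parametrize $GV$ from $Z$ so that the restriction of $\cC_P$ to $GV$ is $f(s) = As^2 + C$ (the linear term vanishes by central symmetry). For the first chord, observe that $P' \in \cC_P$ in general; this is easily checked by substituting the isotomic conjugate $(qr, pr, pq)$ into the barycentric equation $p(r^2-q^2)yz + q(p^2-r^2)xz + r(q^2-p^2)xy = 0$ of the cevian conic. Hence $PP'$ is a real chord of $\cC_P$ with midpoint $U = K(V) = K^{-1}(Z)$, which lies on $GV$ (since $K$ fixes the line through $G$). Writing $\vec{u} = \overrightarrow{UP}$ and letting $M$ be the quadratic form of $\cC_P$, Taylor-expanding $\cC_P$ about $U$ along $\vec{u}$ and using $\cC_P(P) = \cC_P(P') = 0$ yields the midpoint--chord identity $\vec{u}^{\,T} M \vec{u} = -f(s_U)$, where $s_U$ is the parameter of $U$. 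For the second chord, the translation hypothesis (Theorem~\ref{thm:trans}(4)) makes $Z$ the midpoint of $QQ'$, so the same identity gives $(\vec{u}')^{\,T} M \vec{u}' = -C$ with $\vec{u}' = \overrightarrow{ZQ}$.

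Since $Q = K(P')$, $Q' = K(P)$, and $K$ has ratio $-\tfrac{1}{2}$, a direct computation yields $\vec{u}' = \tfrac{1}{2}\vec{u}$, so $(\vec{u}')^{\,T} M \vec{u}' = \tfrac{1}{4}(\vec{u}^{\,T} M \vec{u})$. Combining the two identities gives $f(s_U) = 4C$, hence $A s_U^2 = f(s_U) - C = 3C$. Since $s_U \neq 0$, $A$ and $C$ share the same sign, so $f(s) = As^2 + C$ has no real roots, and the line $GV$ does not meet $\cC_P$.

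For the hyperbola claim, note that $\vec{u}^{\,T} M \vec{u} = -f(s_U) = -4C$ has sign opposite to $C$, while $A = \vec{w}^{\,T} M \vec{w}$ (with $\vec{w}$ a direction vector of $GV$) has the same sign as $C$. Therefore $M$ takes values of both signs on real direction vectors, so $M$ is indefinite, and $\cC_P$ is a hyperbola. The main obstacle I expect is pinning down the verification that $P' \in \cC_P$ (a brief but essential barycentric computation) and justifying the midpoint--chord identity in the form needed; once these are in place, the rest is a purely algebraic manipulation of the two relations $a = -4C$ and $A = 3C/s_U^2$.
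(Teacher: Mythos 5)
Your argument is correct, but it is a genuinely different proof from the one in the paper. The paper stays entirely synthetic: it uses the characterization of $\cC_P$ as the set of $Y$ with $P$, $Y$, $T_P(Y)$ collinear, turns the intersection $GV\cap\cC_P$ into the fixed-point set of a projectivity $\pi$ of the line $GV$, and shows $\pi$ cyclically permutes $U\to Z\to V\to U$, so it has order $3$ and hence no real fixed points; the hyperbola conclusion then comes from $GV$ being an exterior line through the center. You instead write $\cC_P$ in centered form $(X-Z)^{T}M(X-Z)+C$ and play the midpoint--chord identity on the two parallel chords $PP'$ (midpoint $U=K(V)=K^{-1}(Z)$, by Theorem \ref{thm:trans}(6)) and $QQ'$ (midpoint $Z$, by Theorem \ref{thm:trans}(4)) against the factor $\frac14$ forced by $K$ having ratio $-\frac12$, obtaining $As_U^{2}=3C$; this gives both conclusions at once, since $A$ and $C$ then have the same sign (so $As^{2}+C$ has no real zero) while $\vec{u}^{\,T}M\vec{u}=-4C$ has the opposite sign (so $M$ is indefinite and $\cC_P$ is a hyperbola). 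Both proofs consume exactly the same inputs from Theorem \ref{thm:trans} and Corollary \ref{cor:M}; yours is shorter and yields the quantitative relation $As_U^2=3C$ for free, while the paper's keeps to the projective style of the series. Two small things you should make explicit: (i) $Q'\in\cC_P$ (from II/III, where $\cC_P=PQHQ'P'$), so that $QQ'$ really is a chord; and (ii) $C\neq 0$, i.e.\ the center $Z$ does not lie on $\cC_P$ --- this holds because $\cC_P$ is non-degenerate when $P$ avoids the medians, and it is what actually forces $s_U\neq 0$ (rather than the other way around) and rules out the root $s=0$ of $As^{2}+C$.
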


\begin{proof}
We will use the characterization of $\cC_P$ as the set of points $Y$ for which $P, Y$, and $T_P(Y)$ are collinear (II, Corollary 2.2).  \smallskip

Let $Y$ be a point on $GV$ and $Y'=PT_P(Y) \cdot GV$ the projection of $Y_P=T_P(Y)$ onto $GV$ from $P$.  The mapping $Y \rightarrow Y_P$ is projective, since $T_P$ is an affine map, so the mapping $\pi: \ Y  \barwedge Y'$ is a projectivity from $GV$ to itself.  We will show that $\pi$ has no invariant points.  This will imply the lemma, since if $Y \in \cC_P$, then $Y$ lies on $PT_P(Y)$, implying that $Y=Y'$. \smallskip

We will show that the projectivity $\pi$ has order $3$ by showing that $\pi$ coincides with the projectivity $UZV \barwedge ZVU$ on $GV$.  First, $\pi(U) = Z$, because $T_P(U) = T_P(K^{-1}(Z))=Z$ is already on $GV$.  Also, since $Z$ is the midpoint of $QQ'$, $T_P(Z)$ is the midpoint of $T_P(QQ')=QP$.  This implies that $\pi(Z) = QP \cdot GV=V$.  Now, $T_P(V)$ is the intersection of $T_P(PQ)=QT_P(P)$ and $T_P(P'Q')=OP=PP'$ by Corollary \ref{cor:M}.  Hence, $\pi(V)=PT_P(V) \cdot GV =PP' \cdot GV=U$. \smallskip

Since $\pi$ has order $3$, it cannot have any invariant points.  See \cite{co1}, p. 43 or \cite{co2}, p.35, Exercises.  Finally, since $GV$ lies on the center $Z$ of $\cC_P$, but does not intersect $\cC_P$, the conic must be a hyperbola.  This completes the proof.
\end{proof}

Thus, the line $GV$ is an exterior line of $\cC_P$ (\cite{co1}, p. 72), so its pole $V_\infty$ is an interior point, which implies that the line $GV_\infty \pa PP'$ is a secant for the conic $\cC_P$, and therefore meets $\cC_P$ in two points $E$ and $F$.  (These are different points from the similarly named points in Figure \ref{fig:2.2}.) Hence, as $\eta$ fixes the line $EF$ and maps the conic $\cC_P$ to itself (II, p. 27), we have $\eta(E)=F$ and $G$ on $GV$ is the midpoint of segment $EF$.  But $EF=GV_\infty$ is the polar of $V$ with respect to $\cC_P$, so $VE$ and $VF$ are tangent to $\cC_P$ at $E$ and $F$, respectively.  We choose notation so that $E$ is the intersection of $GV_\infty$ with the branch of the hyperbola through $P'$ and $Q'$, which exists since $P'$ and $Q'$ are on the same side of the line $GV$.

\begin{figure}
\[\includegraphics[width=5.5in]{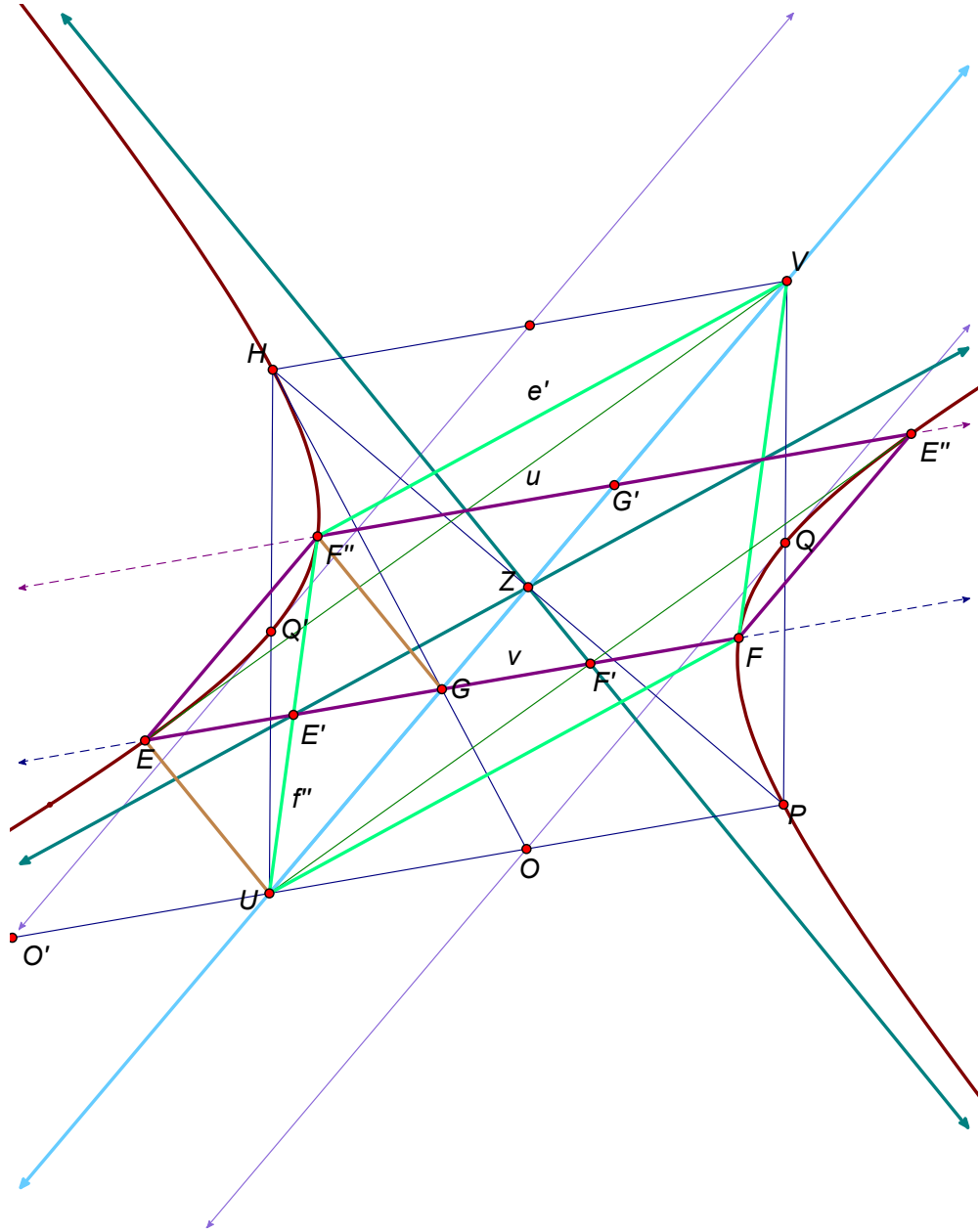}\]
\caption{The parallelogram $HVPU$ and conic $\cC_P$.}
\label{fig:4.1}
\end{figure}

\begin{prop}
Assume $P$ is a point for which $\textsf{M}$ is a translation.  If $E'$ and $F'$ are the midpoints of segments $EG$ and $GF$, then the lines $ZE'$ and $ZF'$ are the asymptotes of $\cC_P$.
\label{prop:asymp}
\end{prop}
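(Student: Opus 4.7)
The plan is to verify the claim by a direct computation in affine coordinates adapted to the asymptote structure of $\cC_P$. Since the statement is purely affine, I place the center $Z$ of $\cC_P$ at the origin and take the two (a priori unknown) asymptotes as the coordinate axes, so that $\cC_P$ has equation $xy = k$ for some $k \neq 0$. Writing $G = (g_1, g_2)$, it then suffices to show that $E'$ lies on the $x$-axis and $F'$ on the $y$-axis, for this will identify $ZE'$ and $ZF'$ with the coordinate axes, i.e., the asymptotes.

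The crucial input from the previous section is Theorem~\ref{thm:trans}, statement (5): the translation hypothesis on $\textsf{M}$ gives the signed ratio $GZ/ZV = 1/3$, hence $V = -3G$ as a position vector from $Z$. The paper has already established that $V$ is the pole of $EF$ with respect to $\cC_P$ and that $G$ lies on $EF$, so applying the pole--polar formula for $xy = k$ (the polar of $(x_0, y_0)$ being $y_0 x + x_0 y = 2k$) and imposing that the polar of $V = (-3g_1, -3g_2)$ contains $G$ forces the key algebraic identity
\[
g_1 g_2 \;=\; -k/3.
\]

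By construction $EF = GV_\infty$ is parallel to the direction conjugate to the diameter $ZG$, which for $xy = k$ is the direction $(g_1, -g_2)$. Parameterizing the chord as $L(t) = G + t(g_1, -g_2)$ and substituting into $xy = k$ yields $g_1 g_2 (1 - t^2) = k$; the identity above collapses this to $t^2 = 4$. Thus $E = L(2) = (3g_1, -g_2)$ and $F = L(-2) = (-g_1, 3g_2)$, so
\[
E' \;=\; \tfrac{1}{2}(E + G) \;=\; (2g_1,\,0), \qquad F' \;=\; \tfrac{1}{2}(G + F) \;=\; (0,\,2g_2),
\]
which lie respectively on the $x$-axis and the $y$-axis, completing the argument.

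The only delicate step is the extraction of the identity $g_1 g_2 = -k/3$ from the translation condition $V = -3G$; this is what makes the chord parameter collapse to $t^2 = 4$ and forces $E'$ and $F'$ to land precisely on the coordinate asymptotes. After that the proof is elementary chord--conic algebra.
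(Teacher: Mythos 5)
Your proof is correct, and it takes a genuinely different route from the paper's. The paper argues synthetically: it applies the half-turn $\textsf{R}_Z$ to the tangents $EV$, $FV$ to produce tangents from $U$, \emph{defines} $E'$ as the midpoint of $UF''$, uses the relation $U=K^{-1}(Z)=K(V)$ from Theorem \ref{thm:trans}(6) through a chain of parallelogram identifications to show this $E'$ coincides with the midpoint of $EG$, and then invokes pole--polar duality to conclude that the infinite point of $e'$ lies on its own polar $ZE'$, forcing $ZE'$ to be an asymptote. You instead coordinatize with $Z$ at the origin and the (unknown) asymptotes as axes, so $\cC_P: xy=k$ (legitimate, since Lemma \ref{lem:GZ} guarantees a hyperbola), and reduce everything to two inputs already established in the paper's setup before the proposition: the signed ratio $GZ/ZV=\tfrac13$, giving $V=-3G$, and the fact that $EF=GV_\infty$ is the polar of $V$ containing $G$, giving the conjugacy relation $g_1g_2=-k/3$. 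The chord computation $(1-t^2)g_1g_2=k$ then collapses to $t=\pm 2$, placing the midpoints of $EG$ and $GF$ exactly on the coordinate axes. Your computations check out (I verified them against the explicit equilateral configuration of Lemma \ref{lem:equilateral}), and the argument is self-contained once one grants the two cited facts; note that $g_1g_2=-k/3\neq 0$ also disposes of the degenerate possibilities $G=Z$ or $G$ on an asymptote. What your approach buys is brevity and a transparent explanation of why the ratio $1/3$ is exactly the right one (it is what makes the quarter-points of the chord $EF$ land on the asymptotes); what the paper's approach buys is that it stays within the affine/projective toolkit of the series and exhibits additional incidence structure (the tangents from $U$, the parallelogram $EF''GU$) along the way. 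The only cosmetic mismatch is the labelling of $E$ versus $F$, which the paper fixes by a choice of branch; since your conclusion identifies the \emph{pair} of lines $\{ZE',ZF'\}$ with the pair of asymptotes, this is immaterial.
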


\begin{proof}
(See Figure \ref{fig:4.1}.)  We know $Z=K(U)$ is the midpoint of segment $UV$ and the center of $\cC_P$.  If we rotate the tangents $EV$ and $FV$ by a half-turn about $Z$, we obtain two lines $E''U$ and $F''U$ through $U$ which are also tangent at points $E''$ and $F''$ respectively.  In particular, $Z$ is the midpoint of segments $EE''$, $FF''$, and $UV$. This implies $F''V \pa UF$ and $Z$ is the center of the parallelogram $VF''UF$.  Now {\it define} $E'$ to be the midpoint of $UF''$.  The line $ZE'$ is halfway between $F''V$ and $UF$, hence $ZE' \pa F''V$. \smallskip

Next we show that $EF''G'G$ is a parallelogram, where $G'=\textsf{R}_Z(G)$.  Now, $Z$ is the midpoint of $EE''$ and $FF''$, so $EF''E''F$ is a parallelogram.  Since $G$ is the midpoint of $EF$, $G'$ is the midpoint of $E''F''$. This implies $F''G' \cong EG$, which proves $EF''G'G$ is a parallelogram.  Also, using part (6.) of Theorem \ref{thm:trans} it is easy to see that $G$ is the midpoint of $UG'$ so $UG \cong GG' \cong EF''$ and $EF''GU$ is a parallelogram, with center $E'$.  This verifies that $E'$ is the midpoint of segment $EG$.  \smallskip

But $E'$ on $EG = v$ implies $V$ lies on its polar $e'$.  Also, $E'$ is on $UF'' = f''$, so $F''$ lies on $e'$.  Together, this implies $e'=F''V$, so from the first paragraph of the proof, $e' \pa ZE'$.  Hence, $e', ZE'$, and $l_\infty$ are concurrent.  The dual of this statement says that $E', l_\infty \cdot e'$, and $Z$ are collinear.  Thus, the infinite point $l_\infty \cdot e'$ lies on $ZE'$, which its own polar!   Hence, $l_\infty \cdot e'$ must lie on the conic and $ZE'$ must be an asymptote.  Applying the map $\eta$ shows that $ZF'=\eta(ZE')$ is also an asymptote.
\end{proof}

We now consider a fixed configuration of points, as in Figure \ref{fig:4.1}, consisting of the parallelogram $HUPV$, its center $Z$, the point $O$ which is the midpoint of side $UP$, the point $G=UV \cdot HO$, the midpoints $Q,Q'$ of opposite sides $HU$ and $PV$, and the points $O', P'$ which are the affine reflections of the points $O, P$ through the line $UV=GZ$ in the direction of the line $UP$, together with the conic $\cC=PQHQ'P'$.  By Theorem \ref{thm:trans} and Corollary \ref{cor:M} this configuration arises from a triangle $ABC$ and the point $P$ (not on the sides of $ABC$ or $K^{-1}(ABC)$), for which the map $\textsf{M}$ is a translation, and such a configuration certainly exists because it can be taken to be the image under an affine map of the configuration constructed in Lemma \ref{lem:equilateral} and Proposition \ref{prop:HA}.  For this configuration the conclusions of Lemma \ref{lem:GZ} and Proposition \ref{prop:asymp} hold, so that $\cC$ is a hyperbola.  Our focus now is on finding all triangles $A_1B_1C_1$ inscribed in the conic $\cC=\cC_P$ for which the map $\textsf{M}_P$ corresponding to $A_1B_1C_1$ is a translation.  This will lead us to a synthetic construction of the elliptic curve $\mathscr{E}_S$ discussed in Section 3.  \medskip

Let $A_1$ be any point on the conic $\cC=PQHQ'P'$, and define $D_0=K(A_1)$, where $K$ is the dilation about $G$ with signed ratio $-1/2$.  Further, let $\cC(A_1)$ be the reflection of the conic $\C$ in the point $D_0$.  If the conics $\cC(A_1)$ and $\C$ intersect in two points $B_1, C_1$, then $A_1B_1C_1$ is the unique triangle with vertex $A_1$ and centroid $G$ which is inscribed in $\C$.  This is because $D_0$ must be the midpoint of side $B_1C_1$ in any such triangle, and lying on $\C$, $B_1$ and $C_1$ must both lie on $\cC(A_1)$.  Since $\cC(A_1)$ is the reflection of $\C$ in $D_0$, its asymptotes  $c = \textsf{R}_{D_0}(a)$ and $d = \textsf{R}_{D_0}(b)$ are parallel to the respective asymptotes $a$ and $b$ of $\C$.  It follows that $\C \cap \cC(A_1)$ can consist of at most two points other than the infinite points on the asymptotes.

\begin{lem} The conics $\cC(A_1)$ and $\C=PQHQ'P'$ intersect in two ordinary points if and only if $A_1$ does not lie on either of the closed arcs of $\C$ between the lines $EF=GV_\infty$ and $P'P$.
\label{lem:arcs}
\end{lem}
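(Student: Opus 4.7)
The plan is to reduce the question to a chord-midpoint problem on the hyperbola $\C$ and then translate the resulting region condition into the arc condition using an affine normalization. First I would observe that $\cC(A_1) = \textsf{R}_{D_0}(\C)$ is the image of $\C$ under a half-turn, so it has the same pair of asymptotic directions as $\C$; hence the two conics share the two points at infinity lying on the asymptotes of $\C$, accounting for two of the four intersection points by Bezout's theorem. Writing $\C$ as $f(X) = 0$ and $\cC(A_1)$ as $f(2D_0-X) = 0$, the quadratic parts of these polynomials coincide, so their difference is a linear form; the remaining ordinary intersections therefore lie on a line $\ell$. Direct substitution gives $D_0 \in \ell$, and a short computation shows that $\ell$ has direction conjugate to $ZD_0$ with respect to $\C$. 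Thus $\ell$ is the chord of $\C$ with midpoint $D_0$ whenever that chord is real, and $B_1, C_1$ exist as two distinct ordinary real points if and only if $D_0$ is the midpoint of a real chord of $\C$.

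For a hyperbola the locus of midpoints of real chords consists of all points of the plane except the two unbounded open regions each bounded on one side by a branch and on the other by the pair of asymptote rays on the concave side of that branch. So the lemma will follow once I show that $D_0 = K(A_1)$ lies in one of these two bad regions precisely when $A_1$ lies on one of the closed arcs of $\C$ between the lines $EF$ and $PP'$.

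To establish this equivalence I would apply an affine change of coordinates placing $Z$ at the origin and putting $\C$ in the standard form $y^2/b^2 - x^2/a^2 = 1$. Affine maps preserve centers, midpoints, parallelism, conics, and the axis of the reflection $\eta$, so the defining relations of the configuration (namely $Z$ is the midpoint of $UV$; $G, Z, V$ are collinear with signed ratio $GZ:ZV = 1:3$; the points $P$ and $Q$ lie on $\C$; and $\eta(Q) = Q'$) force, up to the residual symmetries of the standard hyperbola, the positions $G = (-a/\sqrt{3}, 0)$, $V = (\sqrt{3}a, 0)$, $U = -V$, $P = (-\sqrt{3}a, 2b)$, $E = (-a/\sqrt{3}, 2b/\sqrt{3})$, together with $P' = \eta(P)$ and $F = \eta(E)$. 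For $A_1 = (x_1, y_1) \in \C$ one then has $D_0 = ((-\sqrt{3}a - x_1)/2, -y_1/2) = (d_1, d_2)$, and a direct substitution using $y_1^2/b^2 = 1 + x_1^2/a^2$ reduces the bad-region inequalities $d_1^2/a^2 \le d_2^2/b^2 \le d_1^2/a^2 + 1$ to the single interval $-\sqrt{3}a \le x_1 \le -a/\sqrt{3}$, which is exactly the strip between $PP'$ and $EF$. The portion of $\C$ in this strip consists of the closed arc from $P$ to $E$ on one branch and the closed arc from $P'$ to $F$ on the other, as the lemma asserts.

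The main obstacle is the coordinate normalization, i.e., verifying that the synthetic constraints actually pin down the coordinates as written. This amounts to combining $Z$-centered symmetry with the scalar conditions $Q \in \C$ and $\eta(Q) = Q'$, yielding the single equation $h^2 = a^2/3$ for $h = |ZG|$. Once the coordinates are in place, the reduction of the bad-region inequalities to the claimed $x_1$-interval is a routine algebraic check using only the equation of $\C$.
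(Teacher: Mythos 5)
Your proof is correct, and it takes a genuinely different route from the paper's. The paper argues synthetically and case by case: after an affine normalization it tracks the asymptotes $c,d$ of $\cC(A_1)$ as $A_1$ moves along the arcs, shows that the conic $\cC'=K^{-1}(\cC)$ is an envelope of the right branches of the reflected conics, and reads off from the relative positions of the branches which positions of $A_1$ yield two ordinary intersections. Your reduction is more classical and, in places, more rigorous: since $\cC(A_1)=\textsf{R}_{D_0}(\C)$ has the same quadratic part as $\C$, the two conics share the two infinite points on the asymptotes, and their ordinary intersections are exactly $\C \cap \ell$, where $\ell$ is the line $f(X)=f(2D_0-X)$; this line passes through $D_0$ with direction conjugate to $ZD_0$, hence is the chord bisected at $D_0$, and the half-turn symmetry shows the two intersection points (when they exist) are interchanged by $\textsf{R}_{D_0}$. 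Everything then reduces to the midpoint locus of real chords of a hyperbola plus a short computation in normalized coordinates. I checked the normalization: with $Z$ at the origin and $\C: y^2/b^2-x^2/a^2=1$, the constraints $Z$ the midpoint of $UV$, $GZ:ZV=1:3$, $P,P'=\textsf{R}_U(P)\in\C$ and $Q=$ midpoint of $PV\in\C$ do force $|ZG|^2=a^2/3$, giving $G=(-a/\sqrt3,0)$, $U=(-\sqrt3 a,0)$, $P=(-\sqrt3 a,2b)$, $P'=(-\sqrt3 a,-2b)$, $Q=(0,b)$, $Q'=(0,-b)$, $H=(\sqrt3 a,-2b)$ all on $\C$; and both region inequalities collapse to $-\sqrt3 a\le x_1\le -a/\sqrt3$, which is exactly the strip between the lines $PP'$ and $EF$. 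What your approach buys is a clean, checkable argument where the paper's branch-and-envelope discussion is informal; what it gives up is the purely synthetic character the authors are cultivating.

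One small internal inconsistency should be repaired. You describe the exceptional set as two \emph{open} regions, but your inequalities $d_1^2/a^2\le d_2^2/b^2\le d_1^2/a^2+1$ are (correctly) closed. The accurate statement is: writing $\varphi$ for the quadratic form normalized so that $\C=\{\varphi=1\}$ and the asymptote pair is $\{\varphi=0\}$, a point $M\ne Z$ is the midpoint of a genuine real chord of $\C$ if and only if $\varphi(M)<0$ or $\varphi(M)>1$; points on the asymptotes other than $Z$ bisect no chord at all (the conjugate direction is asymptotic), and points of $\C$ itself bisect only the degenerate chord. These boundary cases are precisely what make the arcs in the lemma closed: $A_1=E$ or $F$ gives $D_0$ on an asymptote, and $A_1=P'$ or $P$ gives $D_0=Q$ or $Q'$ on $\C$, where the two conics merely touch. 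You should also record the one-line check that $D_0=Z$ never occurs, since that would force $A_1=K^{-1}(Z)=U$, which is an interior point of the chord $PP'$ and hence not on $\C$. With those two points made explicit, the argument is complete.
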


\begin{proof} It suffices to prove the lemma for the configuration pictured in Figures \ref{fig:4.2} and \ref{fig:4.3}, since any two configurations for which $\textsf{M}$ is a translation are related by an affine map.  In particular, Corollary \ref{cor:M} shows that one configuration can be mapped to any other by an affine map taking the parallelogram $HUPV$ for the one configuration to the corresponding parallelogram for the other.  \smallskip

Let $c = \textsf{R}_{D_0}(a)$ and $d = \textsf{R}_{D_0}(b)$ be the asymptotes of $\cC(A_1)$, where $a=ZF'$ and $b=ZE'$ are the asymptotes of $\C$.
When $A_1=E$, then $D_0=K(E)=F'$ lying on $a=ZF'$, so the lines $a,c$ coincide.  Then $\C$ and $\cC(A_1)$ have the common tangent $a=c$, so they intersect with multiplicity at least $2$ at $a \cdot l_\infty$.  They also intersect with multiplicity $1$ at $b \cdot l_\infty$, since they have different tangents at that point ($D_0=F'$ is on $a$ but not $b$, so $b \neq d$).  Hence, they can have at most one ordinary point in common.  However, reflecting in $D_0$ (lying on $a=c$ and therefore in the exterior of $\C$), any ordinary intersection of $\C$ and $\cC(A_1)$ would yield a second intersection, so the two conics can't have any ordinary points in common.  On the other hand, if $A_1=P'$ on arc $EP'$, then $D_0=K(P')=Q$ lies on $\C \cap \cC(A_1)$ and also on the tangent $OQ$ to $\C$, so that $\C$ and $\cC(A_1)$ touch at $D_0=Q$.  Thus, they don't intersect in any other ordinary point. \smallskip

\begin{figure}
\[\includegraphics[width=5.5in]{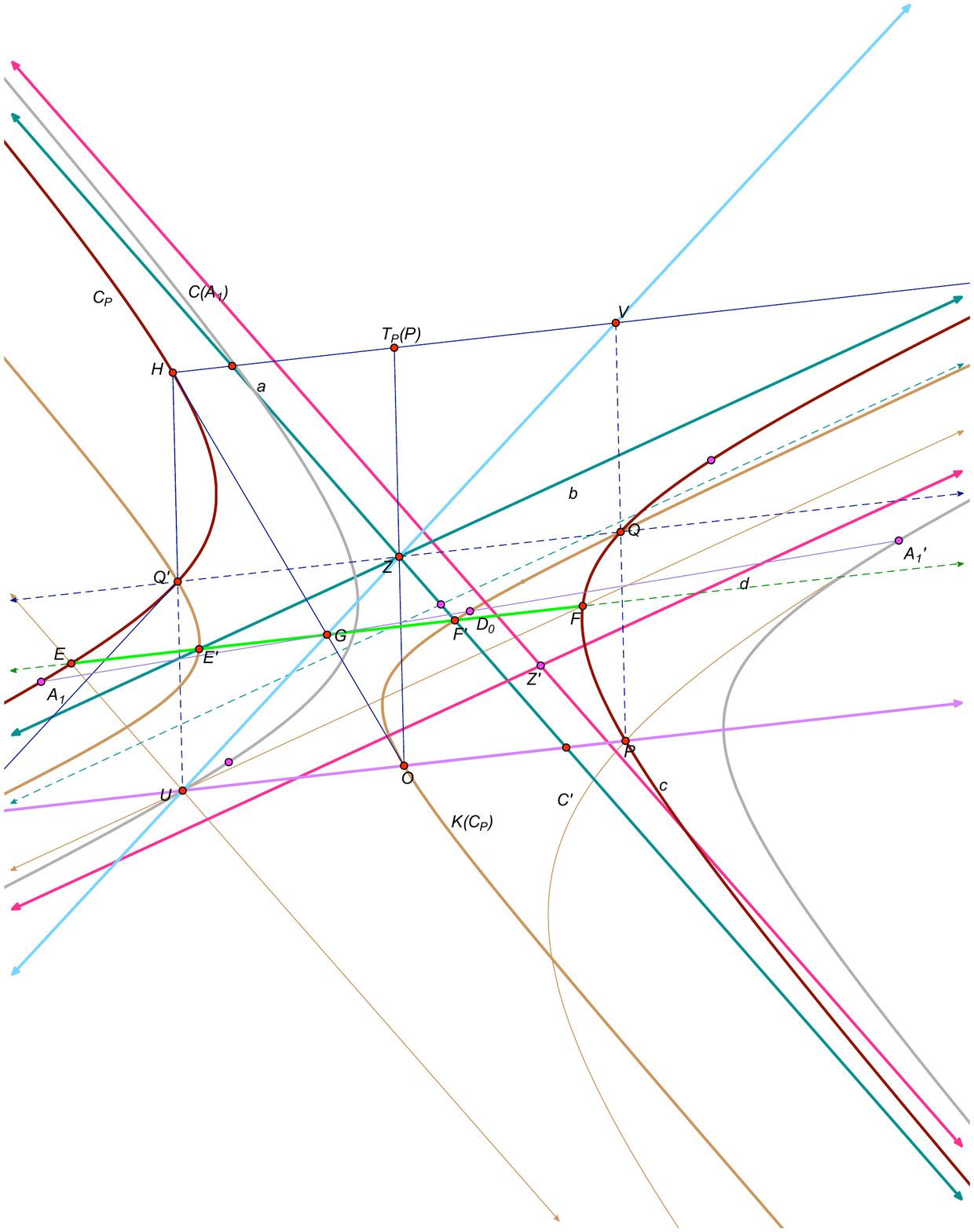}\]
\caption{Conics $\C$ (brown), $K(\cC_P)$ (tan), $\cC(A_1)$ (grey), $\cC'$ (light tan), with $A_1$ on the arc $\mathscr{E}$.}
\label{fig:4.2}
\end{figure}

\begin{figure}
\[\includegraphics[width=5.5in]{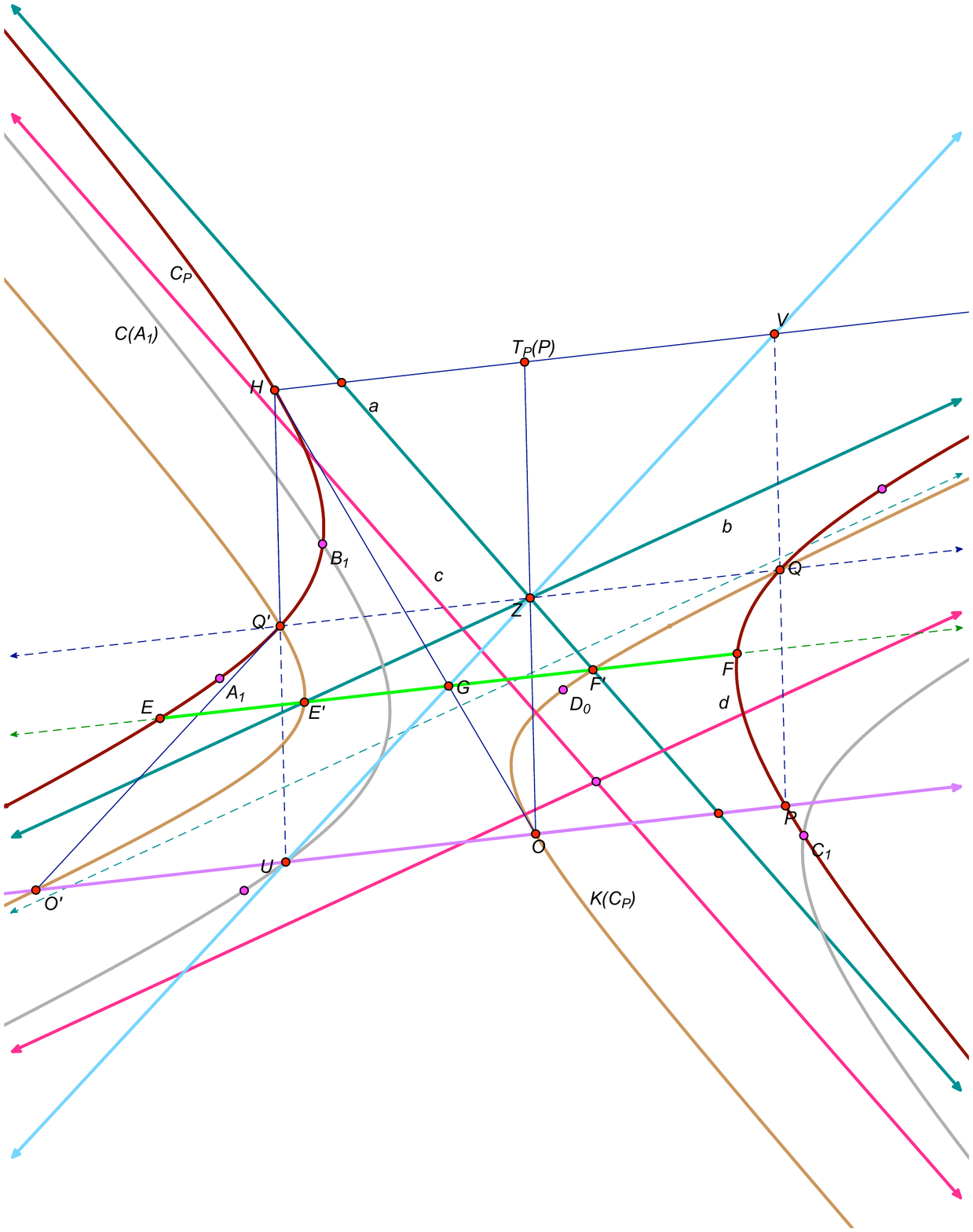}\]
\caption{Conics $\C$ (brown), $K(\cC_P)$ (tan), $\cC(A_1)$ (grey), with $A_1$ above $E$.}
\label{fig:4.3}
\end{figure}

We also claim that there are no ordinary points on $\C \cap \cC(A_1)$ when $A_1$ lies between $E$ and $P'$ on the open arc $\mathscr{E}=EP'$.  In this case $D_0=K(A_1)$ lies on the open arc of the conic $K(\cC)$ between $F'$ and $Q$.  For any $A_1$ on the left branch of $\C$, let $A_1'$ be the reflection of the point $A_1$ in $D_0$.  Using $D_0=K(A_1)$ it is easy to see that $A_1' = K^{-1}(A_1)$, so the tangent $\ell$ to $\C$ at $A_1$ is mapped to a parallel tangent $\ell'=K^{-1}(\ell)$ to $\cC'=K^{-1}(\cC)$ at $A_1'$.  On the other hand, $\ell$ is mapped by reflection in $D_0$ to a line through $A_1'$ parallel to $\ell$, so it must also be mapped to $\ell'$.  Therefore, $\ell'$ is tangent to both conics $\cC'$ and $\cC(A_1)=\textsf{R}_{D_0}(\cC)$ at $A_1'$.  Hence, these conics intersect with multiplicity $2$ at $A_1'$, and since their asymptotes are parallel, this is the only ordinary point where they can intersect.  This holds for any point $A_1$ on the left branch of $\C$, and therefore the right branch of the conic $\cC'$ is an {\it envelope} for the right branches of the conics $\cC(A_1)$.  For all $A_1$ on the left branch of $\C$, $D_0$ lies below the asymptote $K(b)$ of $K(\C)$, which is parallel to and lies halfway between $b$ and the asymptote $b'=K^{-1}(b)=UF$ of $\C'$; hence, the asymptote $d$ of $\C(A_1)$ lies below $b'$, implying that the right branch of $\C(A_1)$ lies in the interior of the right branch of $\C'$.  Since $\cC'$ intersects $\C$ at $K^{-1}(Q')=P$ and $K^{-1}(Q)=P'$, it crosses $\C$ at $P$, and points on $\cC'$ to the right of $P$ lie in the interior of $\C$.  For $A_1=E$, the right branches of $\C$ and $\cC(A_1)$ are also asymptotic in the direction of line $c=a$.  Therefore, as $A_1 \in \mathscr{E}$ moves from $E$ to $P'$, the right branch of $\cC(A_1)$ moves to the right, separated from the right branch of $\C$ by the asymptote $c$ and the conic $\cC'$ and remaining in the interior of $\C$.  It follows that the right branch of $\cC(A_1)$ contains no ordinary points of $\C$ for $A_1 \in \mathscr{E}$.  Since $D_0$ is in the exterior of $\C$ for these points $A_1$, any intersection of the left branch of $\cC(A_1)$ and the left branch of $\C$ would reflect through $D_0$ to an intersection of the right branches.  The left branch of $\cC(A_1)$ also does not intersect the right branch of $\C$ since it is the reflection of that branch in $D_0$.  This proves the claim.  \smallskip

Now assume $A_1$ lies outside the closed arc $\overline{\mathscr{E}}$ on the left branch of $\C$.  First, if $A_1$ lies above the point $E$, then $D_0$ lies below the point $F'$ on $K(\cC)$, and since $c$ lies on the same side of the line $a$ as $D_0$, $c$ lies to the left of the line $a$ in Figure \ref{fig:4.2}.  Since $c$ lies on $a \cdot l_\infty$ and is not the tangent $a$ at that point, it must intersect the conic $\C$ in a second, ordinary, point.  It cannot intersect the right branch of $\C$ because that branch is on the other side of the line $a$.  Hence, $c$ intersects the left branch of $\C$, whence it follows that the left branch of $\cC(A_1)$ intersects $\C$ as well (because this branch of $\cC(A_1)$ is asymptotic to an exterior ray of line $d$ in one direction and to $c$ in the other direction).  At the same time, this shows that the asymptote $a$ of $\C$ intersects the right branch of $\cC(A_1)$, so the right branch of $\C$ intersects the latter.  It is easy to see that these two intersection points are reflections of each other in the point $D_0$. \smallskip

On the other hand, if $A_1$ lies below the point $P'$ on the left branch of $\C$, then $D_0$ lies above $Q$ on $K(\cC)$.  Since $Q \in \cC \cap K(\cC)$, points to the right of $Q$ on $K(\cC)$ are in the interior of $\C$, so the reflection $Q_0$ of the point $Q$ in $D_0$ lies on the left branch of $\cC(A_1)$ in the interior of $\C$.  It follows that the left branch of $\cC(A_1)$ must intersect the right branch of $\C$ in two points.  The same arguments apply to points $A_1$ on the right branch of $\C$, and this completes the proof.
\end{proof}

\begin{lem}
The points $A_1=Q,Q'$ are the only points on $\C=PQHQ'P'$ for which $A_1$ lies on $\cC(A_1)$.
\label{lem:A1}
\end{lem}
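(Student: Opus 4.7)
The plan is to translate the condition $A_1 \in \cC(A_1)$ into a statement about the intersection of $\cC$ with its image under a fixed homothety, and then bound that intersection via Bezout's theorem. First I would observe that $A_1 \in \cC(A_1)$ if and only if the reflection of $A_1$ in $D_0 = K(A_1)$ lies on $\cC$. Since $D_0 = K(A_1)$ means $D_0 = G - \tfrac{1}{2}(A_1 - G)$, the reflection of $A_1$ in $D_0$ equals $2D_0 - A_1 = 3G - 2A_1 = K^{-1}(A_1)$. Thus the defining condition is equivalent to $A_1 \in \cC \cap K(\cC)$.

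Next I would exhibit two obvious ordinary points in this intersection. From $Q = K(P')$ and $Q' = K(P)$, combined with the fact that $P, P', Q, Q'$ all lie on $\cC$, both $Q$ and $Q'$ lie on $\cC$ as well as on $K(\cC)$, and they are distinct since $P \neq P'$ and $K$ is a bijection.

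The heart of the proof is to show that $Q$ and $Q'$ are the only ordinary points of $\cC \cap K(\cC)$. Here I would use Proposition \ref{prop:asymp}: the asymptotes of $\cC$ are $ZE'$ and $ZF'$, and since $K$ is a homothety, the asymptotes of $K(\cC)$ are $K(ZE')$ and $K(ZF')$, which are parallel but not equal to $ZE'$ and $ZF'$ (because $K$ is not the identity). Consequently $\cC$ and $K(\cC)$ pass through the same two infinite points (on the asymptotic directions), but at each such infinite point they have distinct tangent lines, so the intersection multiplicity is exactly $1$ at each. By Bezout's theorem, the two conics meet in exactly $4$ points counted with multiplicity, so there are at most $2$ ordinary intersection points. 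Since $Q$ and $Q'$ already account for two distinct ordinary intersections, $\cC \cap K(\cC) = \{Q, Q'\} \cup (\text{two infinite points})$, and the lemma follows.

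The step that requires the most care is the assertion that the two conics meet transversally at infinity; this hinges on Proposition \ref{prop:asymp} (to know the asymptotic directions) and on the nontriviality of $K$ (so the asymptotes are parallel but distinct). Once that is in hand, the Bezout count is immediate and the lemma follows with no further calculation.
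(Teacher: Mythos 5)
Your proposal is correct and follows essentially the same route as the paper: both reduce the condition $A_1 \in \cC(A_1)$ to membership in the intersection of $\cC$ with a homothetic copy (you use $\cC \cap K(\cC)$ and the points $Q, Q'$; the paper uses $\cC \cap K^{-1}(\cC)$ and the points $P, P'$), and both cap the number of ordinary intersections at two via the parallel-asymptotes/Bezout count. The only nitpick is that your parenthetical ``because $K$ is not the identity'' should really be ``because the asymptotes pass through $Z \neq G$ and hence are not fixed by the homothety $K$ centered at $G$,'' but this does not affect the argument.
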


\begin{proof} Certainly $Q' \in \cC(Q')$ because $D_0=K(Q')$ is the midpoint of segment $Q'P$, so $Q' = \textsf{R}_{D_0}(P)$ lies on $\cC(Q')$, the reflection of $\C$ in $D_0$.  The same argument holds for $Q$.  If $A_1$ is any point lying on $\cC(A_1)$, then $A_1$ and its reflection $A_1'$ in $D_0=K(A_1)$ both lie on the conic $\C$ and are collinear with the point $G$.  Since $A_1'=K^{-1}(A_1)$, the locus of points $A_1'$ coincides with the conic $\cC'=K^{-1}(\cC)$, whose asymptotes are parallel to the asymptotes of $\C$.  Hence, $\cC'$ intersects $\C$ in only the two points $P=K^{-1}(Q')$ and $P'=K^{-1}(Q)$.  This proves the lemma.
\end{proof}

We now fix a parallelogram $H_1U_1P_1V_1$ with center $Z_1$, distinguished point $G_1=U_1V_1 \cdot H_1O_1$, and its corresponding conic $\cC=P_1Q_1H_1Q_1'P_1'$, as in Figure \ref{fig:4.1}.  We will call this configuration the $P_1$ configuration, and consider it fixed for the following discussion.  \medskip

Let $ABC$ be a given triangle.  For any point $P$, not on a median of $ABC$, for which the map $\textsf{M}$, defined relative to $ABC$ and $P$, is a translation, there is an affine map $\textsf{A}^{-1}$ taking the parallelogram $HUPV$ for $ABC$ to the parallelogram $H_1U_1P_1V_1$.  (We avoid points on the medians of $ABC$, because for these points, the conic $\C_P=ABCPQ = AP \cup BC$ and parallelogram $HUPV$ are degenerate.)  Since $ABC$ is inscribed in the cevian conic $\cC_P=ABCPQ=PQHQ'P'$ for $P$, and the points $P', Q, Q'$ are defined by simple affine relationships in terms of the parallelogram $HUPV$, the image triangle $\textsf{A}^{-1}(ABC)=A_1B_1C_1$ under the map $\textsf{A}^{-1}$ is a triangle inscribed in the conic $\cC=P_1Q_1H_1Q_1'P_1'$.  By Theorem \ref{thm:trans} and Corollary \ref{cor:M} all the same relationships hold for the two configurations.  Hence, the centroid $G$ maps to the centroid $G_1$ in the $P_1$ configuration.  It follows from Lemma \ref{lem:arcs} and Lemma \ref{lem:A1} that the image $A_1$ of the point $A$ must lie in the complement of the union of closed arcs $\overline{\mathscr{E}}$ (from $E$ to $P_1'$) and $\overline{\mathscr{F}}$ (from $F$ to $P_1$) on $\cC$, and that $A_1$ is also distinct from the points $Q_1, Q_1'$ (as there is no triangle $A_1B_1C_1$ for these two points).  Thus,
\begin{equation}
A_1 \in \mathscr{A} = \cC - (\overline{\mathscr{E}} \cup \overline{\mathscr{F}} \cup \{Q_1,Q_1', A_\infty, B_\infty\}),
\label{eqn:arcs}
\end{equation}
where $A_\infty = a \cdot l_\infty$ and $B_\infty = b \cdot l_\infty$ are the infinite points on the asymptotes.  The set $\mathscr{A}$ is a union of $6$ open arcs on $\cC$. \medskip

Conversely, let $A_1 \in \mathscr{A}$ and let $A_1B_1C_1$ be the corresponding triangle inscribed in $\cC_{P_1}$.  Then the centroid of $A_1B_1C_1$ is $G_1$, and the cevian conic for $A_1B_1C_1$ and $P_1$ is $A_1B_1C_1P_1Q_1'=\cC_{P_1}$, coinciding with the conic $\cC=P_1Q_1H_1Q_1'P_1'$.  Moreover, the point $P_1$ does not lie on a median of $A_1B_1C_1$; otherwise one of the vertices of the triangle would be collinear with $P_1$ and $G_1$, implying that this vertex would have to coincide with $Q_1$ or $Q_1'$.  This conic has center $Z_1$, and the pole of $G_1Z_1$ is the point $V_\infty = P_1P_1' \cdot l_\infty$.  Now we use the characterization of the isotomic conjugate $\iota(P_1)$ (with respect to $A_1B_1C_1$) as the unique point different from $P_1$ lying in the intersection $\cC_{P_1} \cap P_1V_\infty$ = $\cC \cap P_1V_\infty$ to deduce that $\iota(P_1)=P_1'$.  (See II, p. 26.)  Theorem \ref{thm:trans} shows that $\textsf{M}_1=\textsf{M}_{P_1}$ for the triangle $A_1B_1C_1$ must be a translation.  If $\textsf{A}$ is an affine map taking $A_1B_1C_1$ to $ABC$, then Theorem \ref{thm:trans} shows once again that the map $\textsf{M}=\textsf{A}\textsf{M}_1\textsf{A}^{-1}$ is a translation for the point $P=\textsf{A}(P_1)$.  Hence, $P$ lies on the elliptic curve $\mathcal{E}_S$ of Section 3.  The argument of the previous paragraph shows that every point $P$ on $\mathcal{E}_S$, other than the $12$ points in its torsion group $T_{12}$, is the image $P=\textsf{A}(P_1)$ for some triangle $A_1B_1C_1$ inscribed in $\cC$ and an affine map $\textsf{A}$ for which $\textsf{A}(A_1B_1C_1)=ABC$.  This proves the following theorem.

\begin{thm} Fix a parallelogram $H_1U_1P_1V_1$ and the corresponding hyperbola $\cC=P_1Q_1H_1Q_1'P_1'$, as in Figure \ref{fig:4.1}.  The elliptic curve $\mathcal{E}_S$, minus the torsion subgroup $T_{12}$, corresponding to the vertices of $ABC$, the infinite points on its sides, and the points lying on the medians of $ABC$, is the locus of images $\textsf{A}(P_1)$, where $A_1$ is a point in the set $\mathscr{A} \subset \cC$ (a union of six open arcs on the hyperbola $\cC$), $B_1$ and $C_1$ are the unique points on $\cC$ for which triangle $A_1B_1C_1$ has centroid $G_1$, and $\textsf{A}$ is one of the two affine maps for which $\textsf{A}_1(A_1B_1C_1)=ABC$ or $\textsf{A}_2(A_1C_1B_1)=ABC$.
\end{thm}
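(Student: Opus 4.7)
The plan is to organize the content developed in the two paragraphs preceding the theorem into a clean proof that $\mathcal{E}_S \setminus T_{12}$ coincides with the set of images $\textsf{A}(P_1)$, by establishing the two inclusions separately. Both directions will rest on Theorem \ref{thm:trans}, Corollary \ref{cor:M}, and the two technical lemmas (Lemma \ref{lem:arcs} and Lemma \ref{lem:A1}) on the intersections $\cC \cap \cC(A_1)$.

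For the forward direction, I would take $P \in \mathcal{E}_S \setminus T_{12}$ relative to the given triangle $ABC$. Since $P$ is not on a median of $ABC$ (the six non-vertex points in $T_{12}$ are exactly the median intersections with $\mathcal{E}_S$), the cevian conic $\cC_P=ABCPQ$ and the parallelogram $HUPV$ are non-degenerate, and Corollary \ref{cor:M} guarantees all the affine incidence relations present in the $P_1$ configuration of Figure \ref{fig:4.1}. Hence there exists a unique affine map $\textsf{A}^{-1}$ sending $HUPV$ to $H_1U_1P_1V_1$; this map carries $\cC_P$ to $\cC$, sends $G$ to $G_1$, and transports $ABC$ to a triangle $A_1B_1C_1$ inscribed in $\cC$ with centroid $G_1$. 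Applying Lemma \ref{lem:arcs} rules out $A_1 \in \overline{\mathscr{E}} \cup \overline{\mathscr{F}}$ (since in those cases $\cC(A_1)$ does not meet $\cC$ in two ordinary points, contradicting the existence of $B_1, C_1$), Lemma \ref{lem:A1} rules out $A_1 \in \{Q_1, Q_1'\}$, and the fact that $A_1$ is an ordinary point rules out $\{A_\infty, B_\infty\}$. Therefore $A_1 \in \mathscr{A}$.

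For the converse direction, I would start with $A_1 \in \mathscr{A}$ and use Lemma \ref{lem:arcs} to obtain the two ordinary intersections of $\cC$ with $\cC(A_1) = \textsf{R}_{D_0}(\cC)$, where $D_0 = K(A_1)$; these are the required $B_1, C_1$, and $A_1B_1C_1$ is the unique triangle inscribed in $\cC$ with vertex $A_1$ and centroid $G_1$. The essential verification is that $(A_1B_1C_1, P_1)$ satisfies the hypotheses of Theorem \ref{thm:trans}: namely, that $P_1$ does not lie on a median of $A_1B_1C_1$, and that the isotomic conjugate of $P_1$ with respect to $A_1B_1C_1$ is $P_1'$. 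The first follows because collinearity of a vertex of $A_1B_1C_1$ with $G_1$ and $P_1$ would force that vertex to equal $Q_1$ or $Q_1'$, contradicting Lemma \ref{lem:A1} and the exclusion of the arcs $\overline{\mathscr{E}}, \overline{\mathscr{F}}$ from $\mathscr{A}$. The second follows from the pole-polar characterization in II, p.~26: the pole of $G_1Z_1$ with respect to $\cC_{P_1} = \cC$ is $V_\infty = P_1P_1' \cdot l_\infty$, and the line $P_1 V_\infty$ meets $\cC$ in exactly the second point $P_1'$, identifying it as $\iota(P_1)$. Now Theorem \ref{thm:trans}(2) applied to $P_1$ on $\cC = \tilde \cC_O$ for $A_1B_1C_1$ shows $\textsf{M}_1$ is a translation; and if $\textsf{A}$ is any affine map with $\textsf{A}(A_1B_1C_1) = ABC$, then $\textsf{M} = \textsf{A} \textsf{M}_1 \textsf{A}^{-1}$ is a translation for $P = \textsf{A}(P_1)$, so $P \in \mathcal{E}_S$. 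Finally, $P \notin T_{12}$ since otherwise $P$ would lie on a median of $ABC$, forcing $A_1$ onto a median of $A_1B_1C_1$ and hence, as above, into $\{Q_1, Q_1'\}$.

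The main obstacle will be the converse direction — specifically, confirming that $P_1$ truly plays relative to the constructed $(A_1B_1C_1)$ the same role it plays in the $P_1$ configuration, which reduces to the isotomic conjugacy $\iota(P_1) = P_1'$ via the pole-polar identification above. The existence of the two affine maps $\textsf{A}_1, \textsf{A}_2$ in the statement reflects only the unlabeled choice of ordering on $\{B_1, C_1\}$, since the data $A_1 \in \mathscr{A}$ does not distinguish them, and either ordering yields a valid map $\textsf{A}$ sending the inscribed triangle onto $ABC$.
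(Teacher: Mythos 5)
Your proposal follows the paper's own argument almost step for step: the forward direction via the affine map $\textsf{A}^{-1}$ carrying $HUPV$ to $H_1U_1P_1V_1$ together with Lemmas \ref{lem:arcs} and \ref{lem:A1}, and the converse via the identification $\iota(P_1)=P_1'$ through the pole of $G_1Z_1$, followed by conjugation of $\textsf{M}_1$ by $\textsf{A}$. The one slip is in the last step of the converse: you invoke Theorem \ref{thm:trans}(2) ``applied to $P_1$ on $\cC=\tilde{\cC}_O$,'' but $\cC=P_1Q_1H_1Q_1'P_1'$ is the \emph{cevian} conic $\cC_{P_1}=A_1B_1C_1P_1Q_1$, not the circumconic $\tilde{\cC}_O$ (their centers are $Z_1$ and $O_1$, which are distinct points of the configuration), and $P_1$ lies on its own cevian conic for trivial reasons, so as written this step establishes nothing. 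The repair is immediate and is what the paper's terser wording intends: once $\iota(P_1)=P_1'$ is known, the points $Q=K(P_1')=Q_1$ and $Q'=K(P_1)=Q_1'$ for the triangle $A_1B_1C_1$ are the configuration's own labeled points, the cevian conic $A_1B_1C_1P_1Q_1$ coincides with $\cC$ so its center $Z$ equals $Z_1$, and criterion (4) of Theorem \ref{thm:trans} ($Z$ on $QQ'$) holds at once because $Z_1$ is the center of the parallelogram $H_1U_1P_1V_1$ while $Q_1,Q_1'$ are midpoints of opposite sides. Any of criteria (1) and (3)--(6) can be read directly off the fixed configuration in this way; (2) is the one that cannot, without first constructing $\tilde{\cC}_O$ for $A_1B_1C_1$. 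Everything else in your write-up is correct and matches the paper's proof.
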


By virtue of the above discussion, we have taken the situation of Figure \ref{fig:4.3}, where $P_1$ is fixed and the triangle $A_1B_1C_1$ varies, and transformed it, via the locus of maps $\textsf{A}$ corresponding to $A_1 \in \mathscr{A}$, to the fixed triangle $ABC$ and varying point $\textsf{A}(P_1)=P$ lying on the elliptic curve $\mathcal{E}_S$.

\end{section}

\noindent Dept. of Mathematics, Maloney Hall\\
Boston College\\
140 Commonwealth Ave., Chestnut Hill, Massachusetts, 02467-3806\\
{\it e-mail}: igor.minevich@bc.edu
\bigskip

\noindent Dept. of Mathematical Sciences\\
Indiana University - Purdue University at Indianapolis (IUPUI)\\
402 N. Blackford St., Indianapolis, Indiana, 46202\\
{\it e-mail}: pmorton@math.iupui.edu


\begin{thebibliography}{WWW}

\bibitem[1]{ac} N. Altshiller-Court, {\it College Geometry, An Introduction to the Modern Geometry of the Triangle and the Circle}, Barnes and Noble, New York, 1952. Reprinted by Dover Publications.

\bibitem[2]{co1} H.S.M. Coxeter, {\it The Real Projective Plane}, McGraw-Hill Book Co., New York, 1949.

\bibitem[3]{co2} H.S.M. Coxeter, {\it Projective Geometry}, $2^{nd}$ edition, Springer, 1987.

\bibitem[4]{cre} J.E. Cremona, {\it Algorithms for Modular Elliptic Curves}, Cambridge University Press, 1992.

\bibitem[5]{mm0} I. Minevich and P. Morton, Synthetic Cevian Geometry, preprint, IUPUI Math. Dept. Preprint Series pr09-01, 2009, http://math.iupui.edu/research/research-preprints.

\bibitem[6]{mm1} I. Minevich and P. Morton, Synthetic foundations of cevian geometry, I: Fixed points of affine maps, http://arXiv.org/abs/1504.00210, to appear in J. of Geometry, doi:10.1007/s00022-016-0324-4.

\bibitem[7]{mmq} I. Minevich and P. Morton, A quadrilateral half-turn theorem, Forum Geometricorum 16 (2016), 133-139.

\bibitem[8]{mm2} I. Minevich and P. Morton, Synthetic foundations of cevian geometry, II: The center of the cevian conic, http://arXiv.org/abs/1505.05381, International J. of Geometry 5 (2016), 22-38.

\bibitem[9]{mm3} I. Minevich and P. Morton, Synthetic foundations of cevian geometry, III: The generalized orthocenter, http://arXiv.org/abs/1506.06253, 2015, to appear in J. of Geometry, doi: 10.1007/s00022-016-0350-2.

\bibitem[10]{mm4} I. Minevich and P. Morton, Synthetic foundations of cevian geometry, IV: The TCC-perspector theorem, http:arXiv.org/abs/1609.04297, 2016.

\bibitem[11]{mms} I. Minevich and P. Morton, A cevian locus and the geometric construction of a special elliptic curve, http:arXiv.org/abs/1608.07712, 2016.

\bibitem[12]{si} J.H. Silverman, {\it The Arithmetic of Elliptic Curves, 2nd edition}, Graduate Texts in Mathematics vol. 106, Springer, 2009.

\end{thebibliography}
\end{document}